\newtheorem{theo}{Theorem}
\newtheorem{lem}{Lemma}
\newtheorem{remark}{Remark}
\newcommand{\bbZ}{\mathbb{Z}}
\newcommand{\set}[1]{\left\{#1\right\}}
\newcommand{\card}{\mathrm{card}}
\newcommand{\eps}{\varepsilon}
\newcommand{\N}{\mathbb{N}}
\newcommand{\Z}{\mathbb{Z}}
\def\rvsv{Rolla-Sidoravicius-Surgailis-Vares }
\title{Discrete Hammersley's lines with sources and sinks }
\author{A-L. Basdevant\footnote{Laboratoire
Modal'X, Universit\'e Paris Ouest, France. E-mail:
anne-laure.basdevant@u-paris10.fr}, N. Enriquez\footnote{Laboratoire
Modal'X, Universit\'e Paris Ouest, France. E-mail:
nenriquez@u-paris10.fr}, L. Gerin\footnote{CMAP, Ecole Polytechnique, France. E-mail:
gerin@cmap.polytechnique.fr}, J-B. Gou\'er\'e\footnote{MAPMO,
Universit\'e d'Orl\'eans, France. E-mail:
jean-baptiste.gouere@univ-orleans.fr}}
\begin{document}

\maketitle
\begin{abstract}
We consider stationary versions of two discrete variants of Hammersley's process in a finite box.
This allows us to recover in a unified and simple way the laws of large numbers proved by T. Sepp\"al\"ainen for two generalized Ulam's problems. As a by-product we obtain an elementary solution for the original Ulam problem.

We also prove that for the first process defined on $\Z$, Bernoulli product measures are the only extremal and translation-invariant stationary measures.
\end{abstract}
 
\noindent{\bf {\textsc MSC 2010 Classification}:} 60K35, 60F15.\\
\noindent{\bf Keywords:} Hammersley's process, Ulam's problem,  longest increasing subsequences.

\section{Introduction}

In a celebrated paper, J.M.Hammersley used Poissonization to attack the so-called \emph{Ulam problem} of the typical length $\ell(n)$ of the longest increasing subsequence of a uniform permutation of size $n$. Namely, he reduced this problem to finding  the greatest number of points of a Poisson point process inside a square, an increasing path can go through.  He proved (\cite{HammersleyHistorique}, Theorem 4) that $\ell(n)/\sqrt{n}$ converges in probability to some constant $c$, sometimes refered to as the \emph{Ulam constant}, and conjectured that $c=2$.

The proof of $c=2$ was achieved independently by Logan and Shepp and by Vershik and Kerov in 1977, using algebraic methods. Various authors were then interested in finding a more probabilistic proof of this result. First,  Aldous and Diaconis \cite{AldousDiaconis} gave one, using the properties of what they called  Hammersley's process, which was implicitly introduced in \cite{HammersleyHistorique} (p.358 and following).  Hammersley's process is continuous in time and space and Aldous and Diaconis studied its properties on the infinite line, in particular its stationary distributions. A few years later, Groeneboom \cite{Groeneboom} and  Cator and Groeneboom \cite{CatorGroeneboom}  studied   Hammersley's process on a quarter plane.  By adding what they called Poisson \emph{sinks} and \emph{sources} on the $x$ and $y$-axis, they  also found a stationary version of this process on the quarter plane. Using this point of view, they   were able to recover again  the value of $c$.

In this paper, we study two discrete variants of  Ulam's problem. Namely, for all $p$ in $[0,1]$, $n,m\ge 1$ ,   we replace the original Poisson point process by the following random set $\xi$ of integer points: each integer point of the rectangle $ [1,n]\times [1,m] $ is chosen independently with probability $p$. We are interested in the two following quantities.

\begin{itemize}
\item The length of the longest increasing subsequence through points of $\xi$:
$$
L_{(n,m)}^{(1)}=\max\set{L; \hbox{ there exists }(i_1,j_1),\dots,(i_L,j_L)\in \xi,i_1< \dots< i_L \text{ and } j_1<j_2<\dots <j_L}.
$$
\item The length of the longest non-decreasing subsequence through points of $\xi$:
$$
L_{(n,m)}^{(2)}=\max\set{L; \hbox{ there exists } (i_1,j_1),\dots,(i_L,j_L)\in \xi,i_1< \dots< i_L \text{ and } j_1\leq j_2\leq \dots \le j_L}.
$$
\end{itemize}

One aim of the present work is to recover by simple and unified probabilistic arguments  the first order asymptotics of $L_{(n,m)}^{(1)}$ and $L_{(n,m)}^{(2)}$  already obtained by T. Sepp\"al\"ainen in two independent papers,  for $L_{(n,m)}^{(1)}$ in \cite{Sepp} and for $L_{(n,m)}^{(2)}$ in \cite{Sepp2}. 
In the following result the variables ${\left(L_{(n,m)}^{(i)}\right)}_{(n,m)}$ are coupled in the obvious way. 
Moreover, for any $a,b>0$, $L_{(an,bm)}^{(1)}$ stands for $L_{(\lfloor an \rfloor, \lfloor bn \rfloor)}^{(i)}$.

\begin{theo}\label{TheoLLN}
For $a,b>0$, we have, when $n$ tends to infinity
\begin{eqnarray}\label{eq:LGNModele1}
\label{Eq:LimiteModele1rect}\frac{L_{(an,bn)}^{(1)}}{n}&\stackrel{{\text{a.s.}}}{\to}& 
\begin{cases}
\displaystyle{\frac{\sqrt{p}(2\sqrt{ab}-(a+b)\sqrt{p})}{1-p}}&\mbox{ if } p<\min \{a/b,b/a\},\\
\min\{a,b\}&\mbox{ otherwise},\\
\end{cases}\\
\label{Eq:LimiteModele2rect}\frac{L_{(an,bn)}^{(2)}}{n}&\stackrel{{\text{a.s.}}}{\to}&
\begin{cases}
2\sqrt{a b p(1-p)}+(a-b)p &\mbox{ if } p<a/(a+b),\\
a &\mbox{ otherwise.}
\end{cases}
\end{eqnarray}

\end{theo}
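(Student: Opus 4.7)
The strategy, following the Cator--Groeneboom treatment of the continuous Hammersley process, is to introduce for each of the two models a one-parameter family of modified processes (with \emph{sources} and \emph{sinks} on the boundary) whose longest-path length has a closed-form expectation, and then to sandwich the quantity of interest between such modified versions. Concretely, for the first model the plan is to superimpose on the box $[1,n]\times[1,m]$ an independent Bernoulli$(\alpha)$ field of sources along the bottom edge and an independent Bernoulli$(\beta)$ field of sinks along the left edge, and to let $L^{\alpha,\beta}_{(n,m)}$ denote the length of the longest strictly increasing path that is now allowed to pick up these extra boundary points. An analogous construction handles the second model, with the non-strict constraint dictating a different relation between $\alpha$, $\beta$ and $p$.

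The heart of the argument is a stationarity relation. I would identify a one-parameter curve $\mathcal{C}_p\subset[0,1]^2$ along which, for every $n,m$, the joint law of sources and sinks exiting through the top and right edges of the box equals the input law on the bottom and left edges. I expect this curve to be $\alpha\beta=p(1-\alpha)(1-\beta)$ for the first model and an analogous relation for the second. Combined with a Burke-type conservation-of-trajectories argument, stationarity should yield
\[
\E\bigl[L^{\alpha,\beta}_{(n,m)}\bigr] \;=\; \alpha n + \beta m.
\]
Monotonicity gives the upper bound $L^{(i)}_{(n,m)}\leq L^{\alpha,\beta}_{(n,m)}$, while bounding the unused boundary particles in the optimal modified path (binomials of mean $\Theta(n)$ and fluctuation $O(\sqrt{n})$) yields a matching lower bound of the form $L^{(i)}_{(n,m)}\geq L^{\alpha,\beta}_{(n,m)}-N^{\mathrm{src}}-N^{\mathrm{snk}}$. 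Standard superadditivity (via Kingman's theorem) together with concentration then promote these expectation bounds to almost-sure statements, and minimizing $\alpha a+\beta b$ over $\mathcal{C}_p\cap[0,1]^2$ produces the formulas stated in the theorem: the interior regime corresponds to a Lagrange critical point inside $(0,1)^2$, and the flat regime $\min\{a,b\}$ (respectively $a$) arises when the constrained optimum is attained at a vertex $\alpha\in\{0,1\}$.

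The main obstacle is the stationarity step. Whereas for the continuous Hammersley process a Burke-type theorem delivers invariance in one stroke, here one must carefully set up a column-by-column (or row-by-row) dynamics for the Hammersley lines and check that the product Bernoulli measure is preserved by each single update. The update rule differs between the two models precisely because of the strict-versus-non-strict distinction, so the invariant families along $\mathcal{C}_p$ have to be identified separately; this also explains why the second model keeps a non-trivial drift term $(a-b)p$ even when $p$ is small. Once the invariant families are in hand, the remainder of the argument reduces to the concentration estimate and the constrained optimization sketched above.
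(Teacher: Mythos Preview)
Your framework matches the paper's: sources and sinks, stationarity along a one-parameter family (your curve $\alpha\beta=p(1-\alpha)(1-\beta)$ is exactly the paper's relation $\alpha^\star=\tfrac{p(1-\alpha)}{\alpha+p(1-\alpha)}$), the exact identity $\E\bigl[\mathcal{L}^{\alpha,\alpha^\star}_{(n,m)}\bigr]=n\alpha+m\alpha^\star$ coming from the decomposition ``lines out the top plus sinks'', the upper bound by minimizing over $\alpha$, and superadditivity for the almost-sure limit. One correction for Model~2: the sinks are not Bernoulli but shifted geometric with success parameter $\alpha^\star=\tfrac{\alpha-p}{\alpha(1-p)}$, which is where the constraint $\alpha>p$ and the drift $(a-b)p$ come from.

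The genuine gap is the lower bound. Your inequality $L^{(i)}_{(n,m)}\ge L^{\alpha,\beta}_{(n,m)}-N^{\mathrm{src}}-N^{\mathrm{snk}}$ is useful only if $N^{\mathrm{src}},N^{\mathrm{snk}}$ denote the boundary points \emph{used} by an optimal modified path (call them $D_n,D'_n$); if they denote the \emph{total} number of sources and sinks, the right-hand side has expectation zero and says nothing. But $D_n$ is not a binomial, and for a generic $(\alpha,\beta)$ on the stationary curve one has $D_n=\Theta(n)$, not $o(n)$; your parenthetical ``binomials of mean $\Theta(n)$ and fluctuation $O(\sqrt n)$'' does not apply. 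The crucial point you have not identified is that $D_n/n\to 0$ holds \emph{only} at the minimizing $\alpha=\alpha(a,b)$, and proving it requires real work. The paper (following Rolla--Sidoravicius--Surgailis--Vares) does this in two steps. First, a large-deviations lemma: any modified path whose last source lies in $[\eps an,an]$ has length at most $n\bigl(\phi^{(1)}(a,b,\alpha(a,b))-f(\eps)\bigr)$ with probability $\ge 1-e^{-ng(\eps)}$, because such a path is bounded by the sources on $[0,\eps an]$ plus an optimal bulk path in the box $[\eps an,an]\times[0,bn]$, and the latter is governed by the \emph{different} optimizer $\alpha(a(1-\eps),b)\neq\alpha(a,b)$, making the combined expectation strictly smaller than $\phi^{(1)}(a,b,\alpha(a,b))$ (this is exactly where the strict concavity of the limit shape enters). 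Second, a union bound over the possible values of $\eps$ yields $\mathbb{P}(D_n>an\delta)\le e^{-nh(\delta)}$, hence $D_n/n\to 0$ in $L^1$. Without this argument your lower bound does not close.
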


To prove this, Sepp\"al\"ainen associates to each  problem a particle system  on the infinite line  $\Z$
and checks that Bernoulli product measures are stationary. The position of particles at a given time is then characterized as the solution of a discrete optimization problem, and the a.s. limit of $L_{(an,bn)}^{(i)}/n$ is  identified using convex analysis arguments.
Note also that for the second question, Johansson used later (\cite{Johansson}, Th.5.3) another description of $L_{(an,bn)}^{(2)}/n$ and proved that the rescaled fluctuations  converge to the Tracy-Widom distribution.

In \cite{SidoBroken2}, \rvsv considered the closely related model of last-passage percolation with geometric weights.
Here is a quick description of the model.
Let $(\tilde \xi(i,j))_{1 \le i \le n, 1\le j \le m}$ be a family of independent geometric random variables with parameter $p$.
Set:
$$
L_{(n,m)}^{(3)}=\max\set{ \sum_t \tilde \xi (\gamma(t))}
$$
where the maximum is taken over all paths $\gamma$ from $(1,1)$ to $(n,m)$ with $(1,0)$ or $(0,1)$ steps.
With respect to $L_{(n,m)}^{(1)}$ or $L_{(n,m)}^{(2)}$, we have simply changed the weight of each point (from Bernoulli to geometric) and 
the set of paths considered.
\rvsv gave a new proof of the following result:
$$
\frac{L_{(n,n)}^{(3)}}{n}\xrightarrow[n\to +\infty]{\text{a.s.}} \frac{2\sqrt{p}}{1-\sqrt p}.
$$
(The original proof was given by Jockusch-Propp-Shor in \cite{ArcticCircle}.)

Our proof of Theorem 1 is inspired by the proof of \rvsv of the above result.
The strategy is to consider a particle system on a \emph{bounded domain} which turns out to coincide with the restriction of Sepp\"al\"ainen's particle system on $\Z$. This simplifies the definition of the process (especially, in the case of the second problem). Moreover,
 it turns out that a {\it local} balance property around a single site, see Lemma \ref{Lem:Burke1} and \ref{Lem:Burke2} below, is enough to check the stationarity of the process. 
Our proofs are essentially the same for both models. 
This kind of remarkable local balance property also occurs in last-passage percolation with geometric weights 
(see \rvsv, Section 3.1 in \cite{SidoBroken2} or Sepp\"al\"ainen, Lemma 2.3 in \cite{Sepp3}). 
The particle systems studied here provide two other examples where such a property holds.
Theorem 1 is then proven by investigating the behaviour of the models under the different stationary measures which have been exhibited.

A nice by-product of our proof is that we obtain non-asymptotic
estimates which provide an elementary proof of $c=2$ for the original
Ulam problem (see the discussion at the end of Section \ref{sectionLLN}).

In the last section, which can be read independently, we complete the study of the original \emph{infinite} discrete Hammersley process introduced by Sepp\"al\"ainen \cite{Sepp} for the Problem 1. We prove that the only extremal translation-invariant stationary measures of this process are the Bernoulli product measures.

\section{Discrete Hammersley's processes}

Like Hammersley did, we construct two sets of broken lines  whose cardinality is respectively the variable $L_{(n,m)}^{(1)}$ and $L_{(n,m)}^{(2)}$. 
For each case, we first introduce a partial order:

$$
\begin{array}{c c c}
\text{Case 1:}              & \phantom{1302}&   \text{Case 2:}\\
(x,y) \prec (x',y')\text{ iff }   &  & (x,y) \prec (x',y')\text{ iff }\\
x<x' \text{ and }y<y'          &   & x<x' \text{ and }y\leq y'
\end{array}
$$
 
Now, Hammersley's lines are  paths starting from the top side of the rectangle $[1,n]\times [1,m]$, ending at its right side and making only south or east steps. 
They are constructed recursively. The first line is the highest non-increasing path connecting the minimal points of $\xi$ for $\prec$. We withdraw these points from $\xi$ and connect the new minima to get the second line, and so on.
In the below picture, $n=m=8$ and  $L_{(n,m)}^{(1)}=4$, $L_{(n,m)}^{(2)}=5$ (crosses denote points of $\xi$, Hammersley's lines are in blue, one of the longest subsequences is shown in a red dashed line):
\begin{center}
\begin{tabular}{c c c}
\includegraphics[width=55mm]{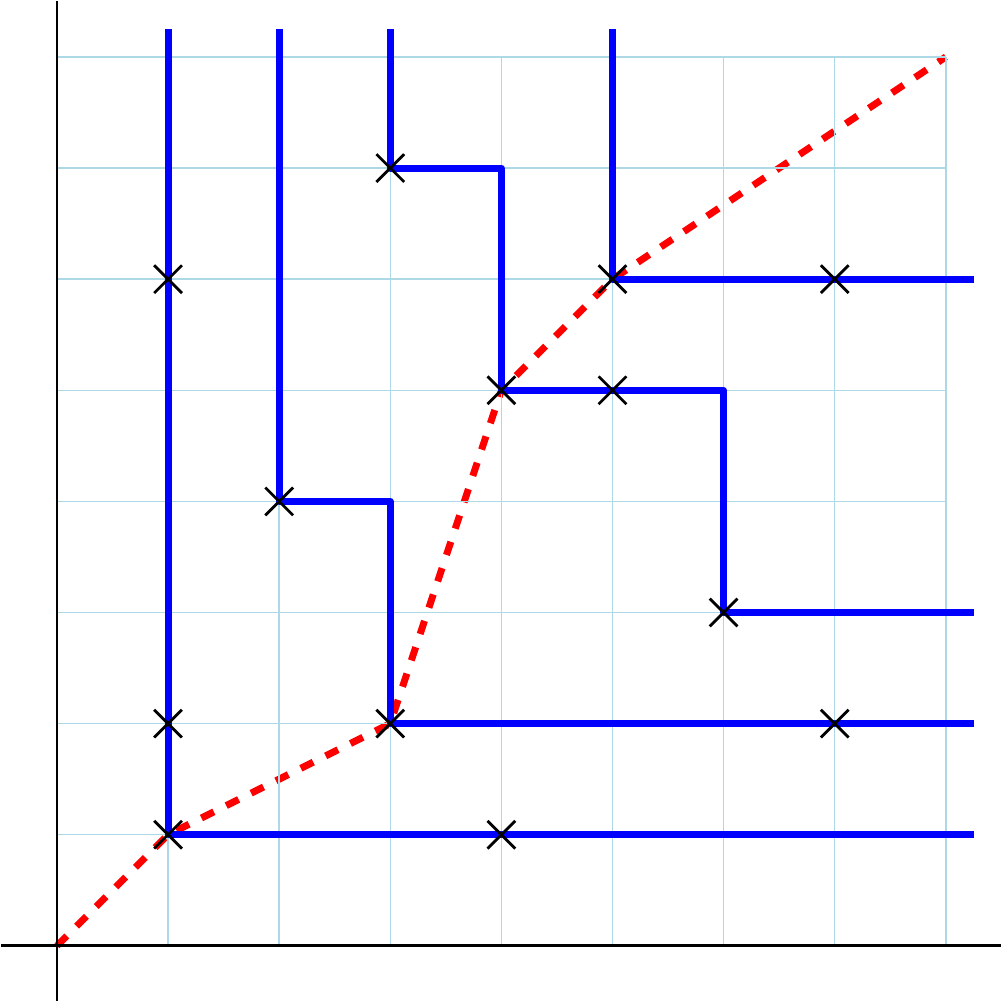} & \hspace{8mm} & \includegraphics[width=55mm]{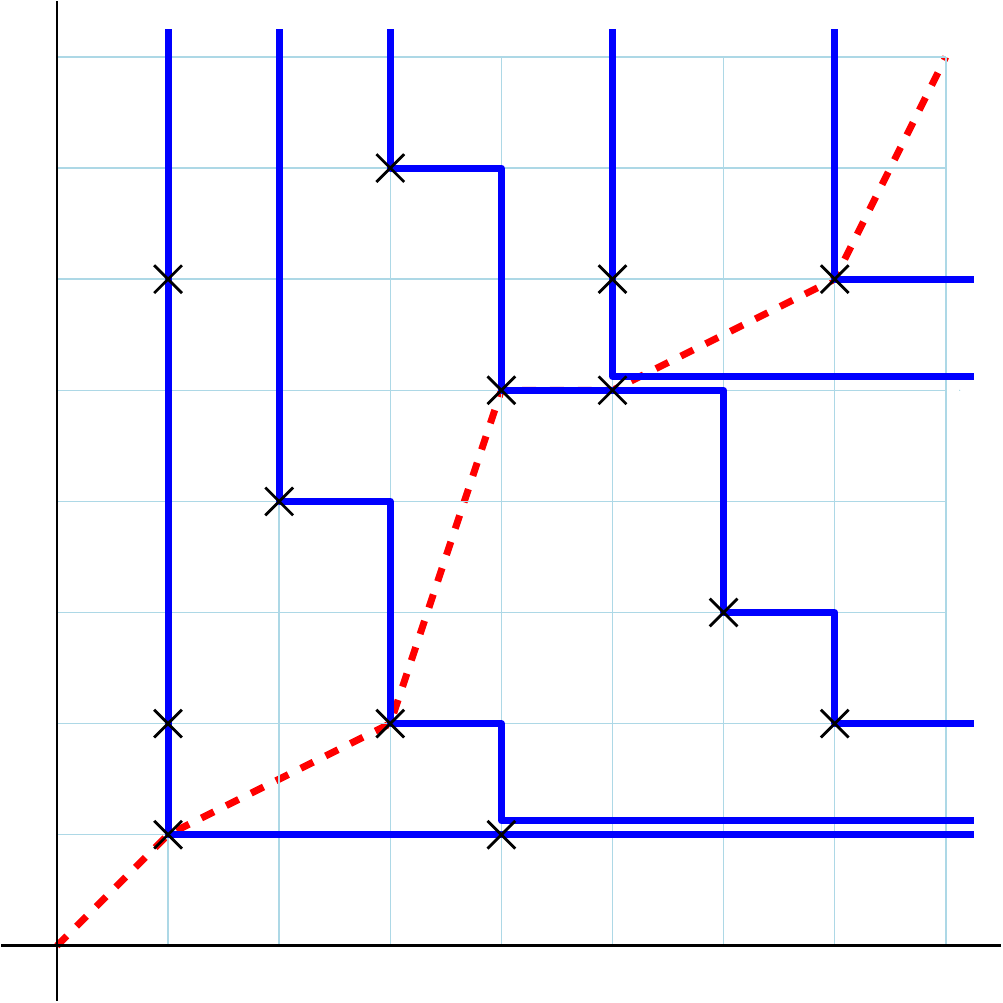}\\
Problem 1 & & Problem 2 
\end{tabular}

\end{center} We claim that
 the number  $H_{(n,m)}^{(i)}$ of Hammersley's lines is equal to $L_{(n,m)}^{(i)}$. Indeed,
note that, in both cases, $L_{(n,m)}^{(i)}$ is equal to the length of the longest increasing subsequence trough points of $\xi$ for the partial order  $\prec$. Each Hammersley's line connecting minimal points of a subset of $\xi$, the $L_{(n,m)}^{(i)}$ points
of a maximizing sequence necessarily lie on distinct Hammersley's lines
and therefore $H_{(n,m)}^{(i)}\geq L_{(n,m)}^{(i)}$. To prove the opposite inequality, we observe that Hammersley's lines
give a construction of an increasing sequence of points $a_1,\dots
a_{H_{(n,m)}^{(i)}}$ of $\xi$ in the following way. For $a_{H_{(n,m)}^{(i)}}$ we
take any point of $\xi$ which lies in the top right Hammersley's line.
Then, for each $\ell$, $a_{\ell -1}$ is taken as one of the points of
$\xi$ which lies in the $\ell-1$-th line and such that $a_{\ell -1}\prec a_{\ell} $. The existence of such $ a_{\ell -1}$ is granted. Indeed, if no such point existed,  $a_{\ell}$ would have belonged to the $(\ell-1)$-th line. This proves $L_{(n,m)}^{(i)}\geq
H_{(n,m)}^{(i)}$.

Let us note that the construction of lines above is consistent in $n$ and $m$, \emph{i.e.} for $n'<n$ and $m'<m$ the restriction of Hammersley's lines to the rectangle $[1,n']\times [1,m']$ coincides with Hammersley's lines constructed from the points of $\xi$ in this smaller rectangle.
In particular, for $n\ge 1$, we can define the two discrete time processes
 $(X_t^1)_{t\ge 0}:=(X^1_t(x), \ x\in \llbracket 1,n \rrbracket)_{t\ge 0}$ and $(X_t^2)_{t\ge 0}:=(X^2_t(x),\  x\in \llbracket 1,n \rrbracket)_{t\ge 0}$ defined by 
  \begin{equation}\label{eq:defX}
      X_t^i(x)=\left\{
        \begin{array}{ll}
          1 &\mbox{if there is a Hammersley's line on the vertical edge } \{(x,t),(x,t+1)\} \\
           0 & \mbox{otherwise.}
        \end{array}\right.
    \end{equation}
 Note that on each vertical edge there is at most one Hammersley's line. Moreover, in Model 1, there is also at most one Hammersley's line on any horizontal edge
 and Hammersley's lines do not intersect each others.
 We will say that there is a  \emph{particle} at time $t$ at position $x$ in Model $i$ if $X^i_t(x)=1$.  Hence, for both models we start with the initial condition $X^i_0 \equiv 0$ and  one can check that both processes are Markovian. Besides, since Hammersley's lines start all from the top side of the rectangle,
\begin{equation*}\label{e:salut}
\hbox{ for both models, the number of particles at time }t \hbox{ is equal to }L_{(n,t)}^{(i)}.
\end{equation*}

In both models, the dynamic of the particle system is quite simple. 
As the description of the dynamic is not needed in the proof of Thereom \ref{TheoLLN}, we do not give such a description here.
However, we refer the interested reader to Appendix \ref{s:leretour}.

\section{Sinks and sources}\label{sectionsinksources}
Since, in both models, the number of particles can only increase as time goes on,  both processes converge to their unique stationary measure  $X_\infty^i:\equiv 1$. In this section, we are going to modify a little bit the problems we are looking at, such that the particle systems associated to the new problems admit less trivial stationary measures. To do so, we must first introduce some notation.

From now on, for any integer  $x \ge 1$ and $t \ge 1$, we define the random variable $\xi_t(x)$ by 
\begin{equation} \label{e:tram}
      \xi_t(x)=\left\{
        \begin{array}{ll}
          1 &\mbox{if  }(x,t)\in \xi \\
           0 & \mbox{otherwise.}
        \end{array}\right.
    \end{equation}
 With this notation, we can write the quantities $L_{(n,m)}^{(i)}$ as 
\begin{equation}\label{eq:defmax}L_{(n,m)}^{(i)}=\max\set{ \sum_{s}\xi_{t(s)}(x(s))}
\end{equation}
where the maximum is taken over all the discrete paths $(x(\cdot),t(\cdot))$ in $[1,n]\times[1,m]$ starting from $(1,1)$ which make only steps in $\N \times \N$ for Model 1 and in $\N \times \Z_+$ for Model 2.
Given two sequences of integers  $(\xi_t(0))_{t\in  \llbracket 1,m \rrbracket}$  and  $(\xi_0(x))_{x\in \llbracket 1,n \rrbracket}$, we now consider 
the quantities 
\begin{equation}\label{eq:defmax2}\mathcal{L}_{(n,m)}^{(i)}=\max\set{ \sum_{s} \xi_{t(s)}(x(s))}
\end{equation}
where the maximum is taken over all the discrete paths $(x(\cdot),t(\cdot))$ in $[0,n]\times[0,m]$ starting from $(0,0)$ such that:
\begin{itemize}
 \item There exists an integer $u \in \Z_+$ such that: either they first make $u$ $(0,1)$ steps or they first make $u$ $(1,0)$ steps.
 \item Then, they only make steps in $\N \times \N$ for Model 1 and in $\N \times \Z_+$ for Model 2.
\end{itemize}
Using the same terminology as in \cite{CatorGroeneboom}, we say that there is $k$ \emph{sinks} (resp. \emph{sources}) at site $(0,t)$ (resp. $(x,0)$) if $\xi_t(0)=k$ (resp. $\xi_0(x)=k$). The sources will be drawn on the $x$-axis and the sinks on the $y$-axis.
Hence, in other words, $\mathcal{L}_{(n,m)}^{(i)}$ is equal to the maximum number of points an increasing (resp. non-decreasing) path can go trough when the path can pick either sources or either sinks before going into $\N \times \N$.
   The law of the sources and sinks will be specified later,  let us just say that the sources will take values in $\{0,1\}^n$ whereas the sinks will take values in $\{0,1\}^m$ for Model 1 and in $\Z_{+}^m$ for Model 2. 
   
\begin{figure}[h!]
\begin{center}
\begin{tabular}{c c c}
\includegraphics[width=7cm]{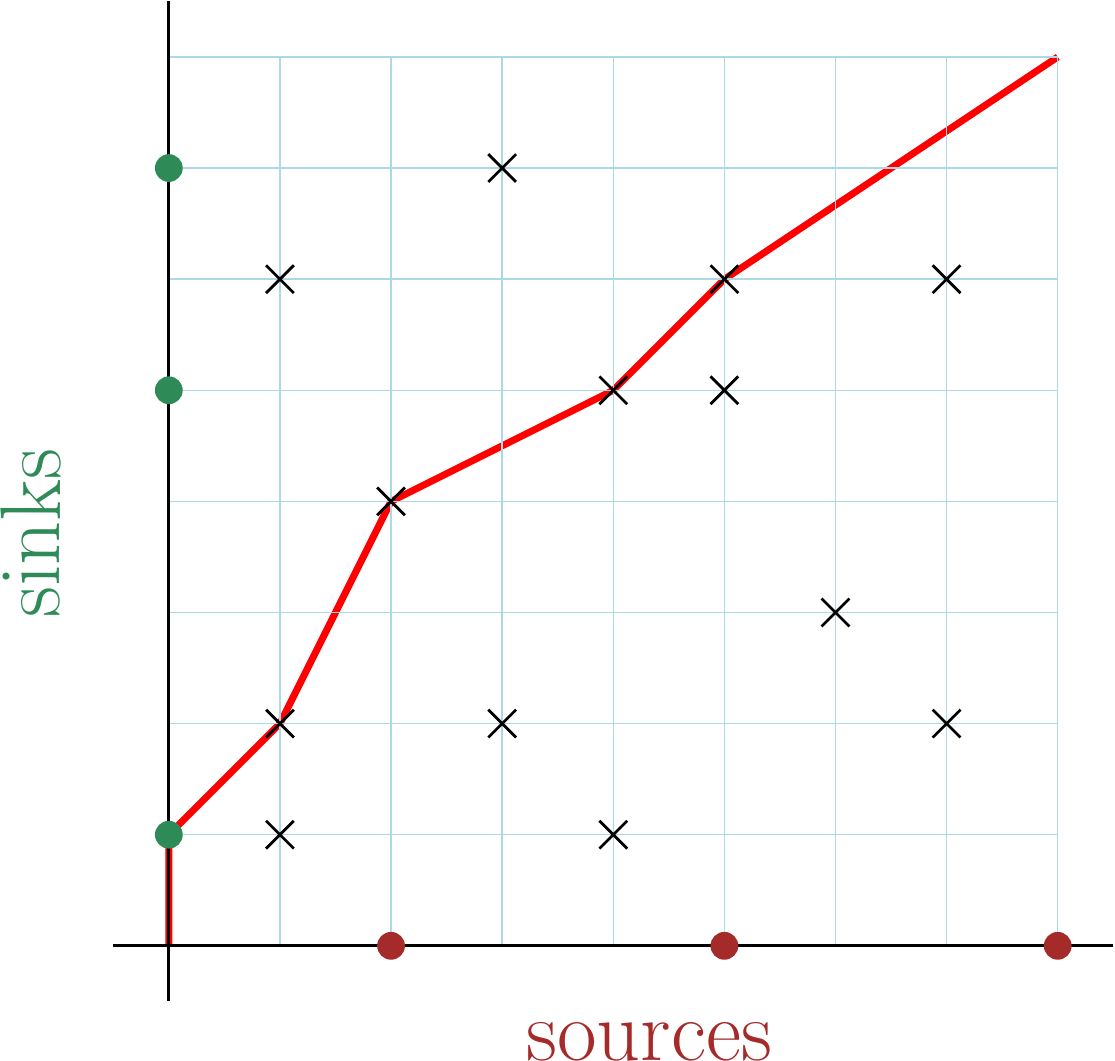} & \ \ \ \ & \includegraphics[width=7cm]{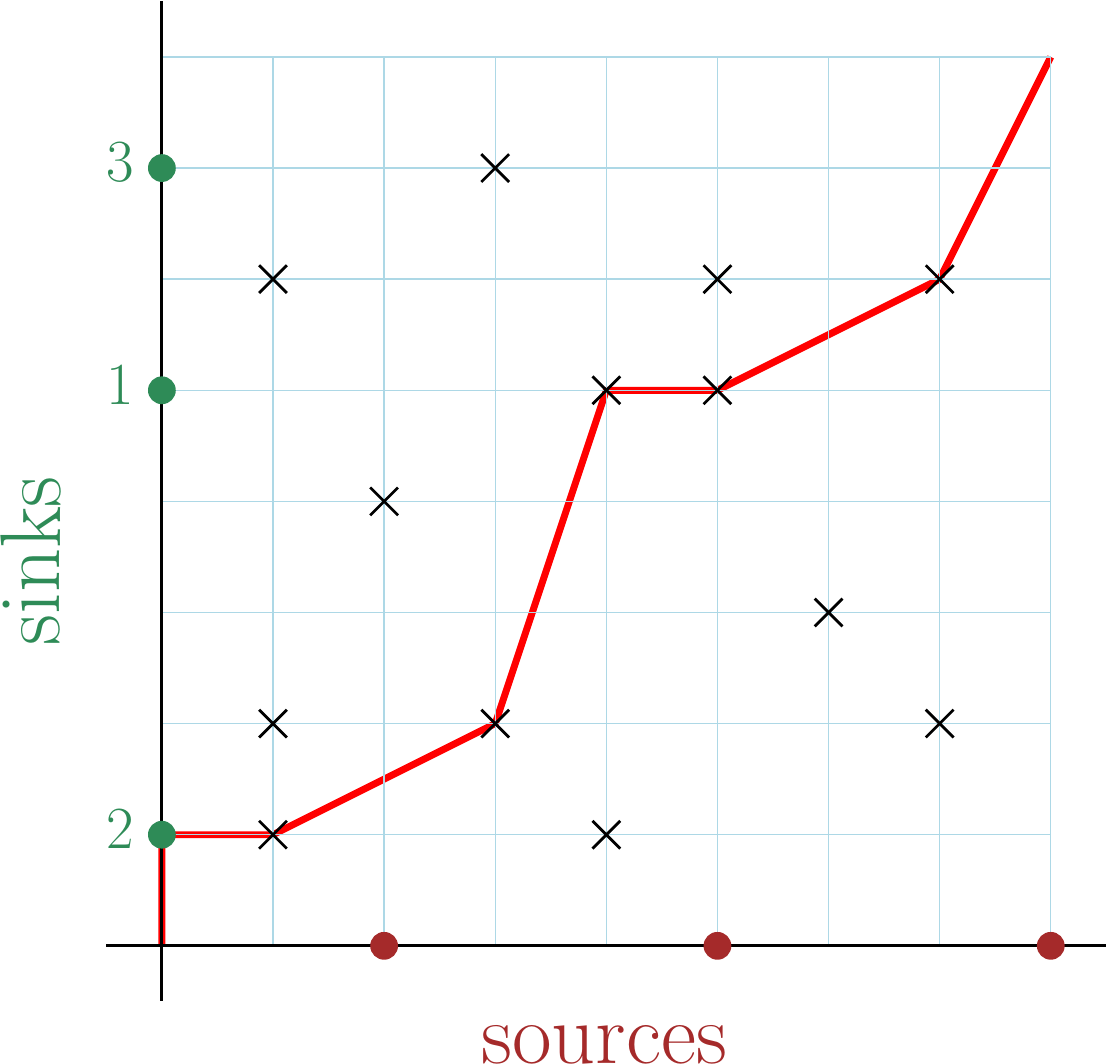}\\ 
Model 1, $\mathcal{L}_{(n,n)}^{(1)}=5$ & & Model 2, $\mathcal{L}_{(n,n)}^{(2)}=7$ 
\end{tabular}
\end{center}
\end{figure}

As before, we can construct a  set of broken lines whose cardinality is equal to $\mathcal{L}_{(n,m)}^{(i)}$. 
Now, Hammersley's lines are  paths starting from the top side or the left side of the rectangle $[0,n]\times [0,m]$, ending at its bottom side or its right side and making only south or east steps. 
They are still constructed recursively. Assume that the sink of minimal height is located at $(0,t_1)$ and the leftmost source at $(x_1,0)$. Then, the first line is the highest non-increasing path starting from $(0,t_1)$, ending at $(x_1,0)$ and connecting minimal points $(x,t)$ of $\xi$ for the partial order $\prec$ introduced in the previous section such that $(x_1,0)\nprec (x,t)$
and $(0, t_1)\nprec (x,t)$. If there is no sink (resp. source) at all, we do the same construction except that the line starts from the top of the box (resp. ends on the right of the box).
Then, we withdraw these points from $\xi$, one of the sink located at $(0,t_1)$ and the source located at $(x_1,0)$ and we do the same procedure with this new set of sources, sinks, and set of points  in $\llbracket 1, n\rrbracket\times \llbracket 1, m\rrbracket$ to obtain the second line.
 The algorithm goes on until there is no more points, sinks and sources.
 Using the same argument as in the previous section, one can easily prove that the number of lines obtained with this procedure is equal to $\mathcal{L}_{(n,m)}^{(i)}$.

\begin{figure}[h!]
\begin{center}
\begin{tabular}{c c c}
\includegraphics[width=7cm]{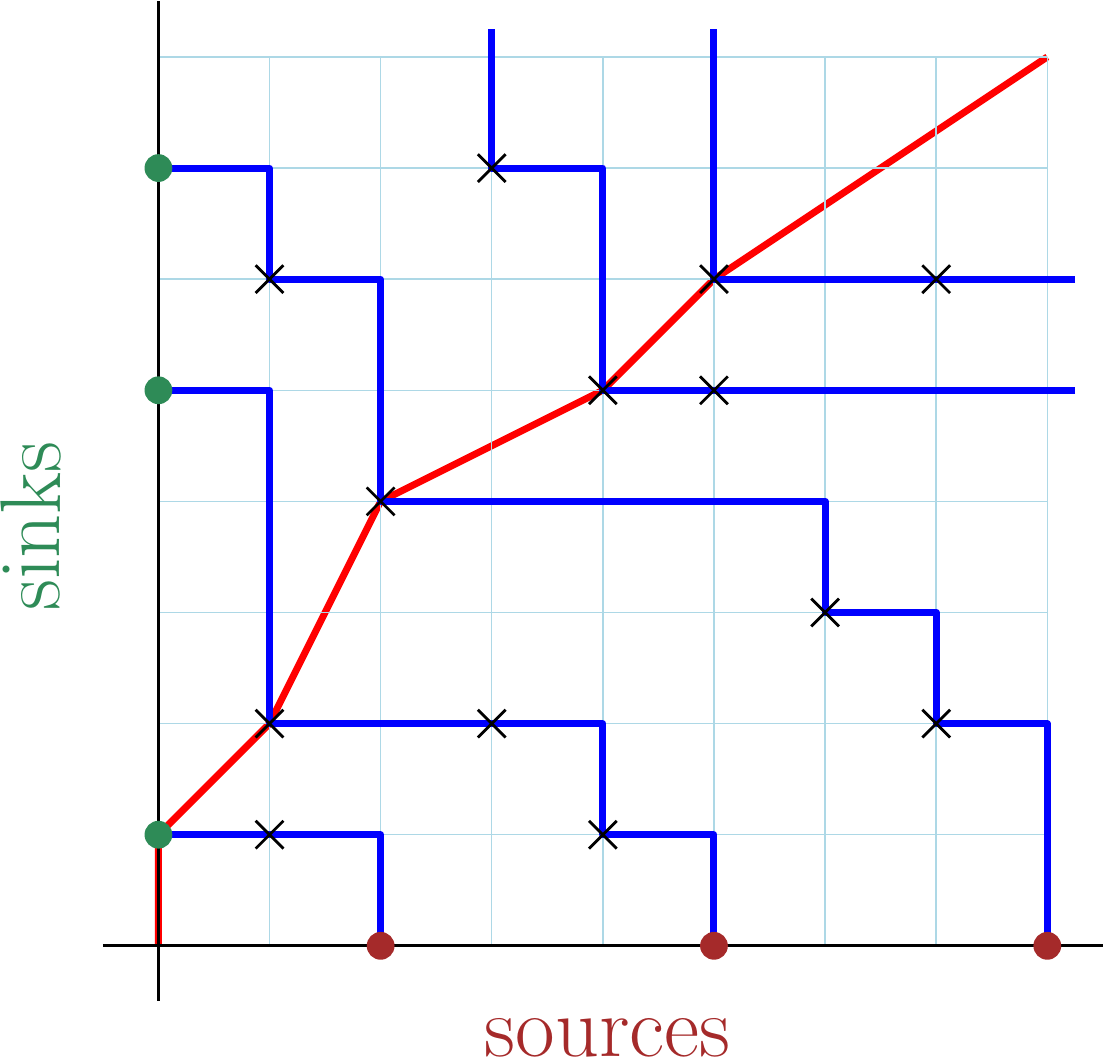} & \ \ \ \ & \includegraphics[width=7cm]{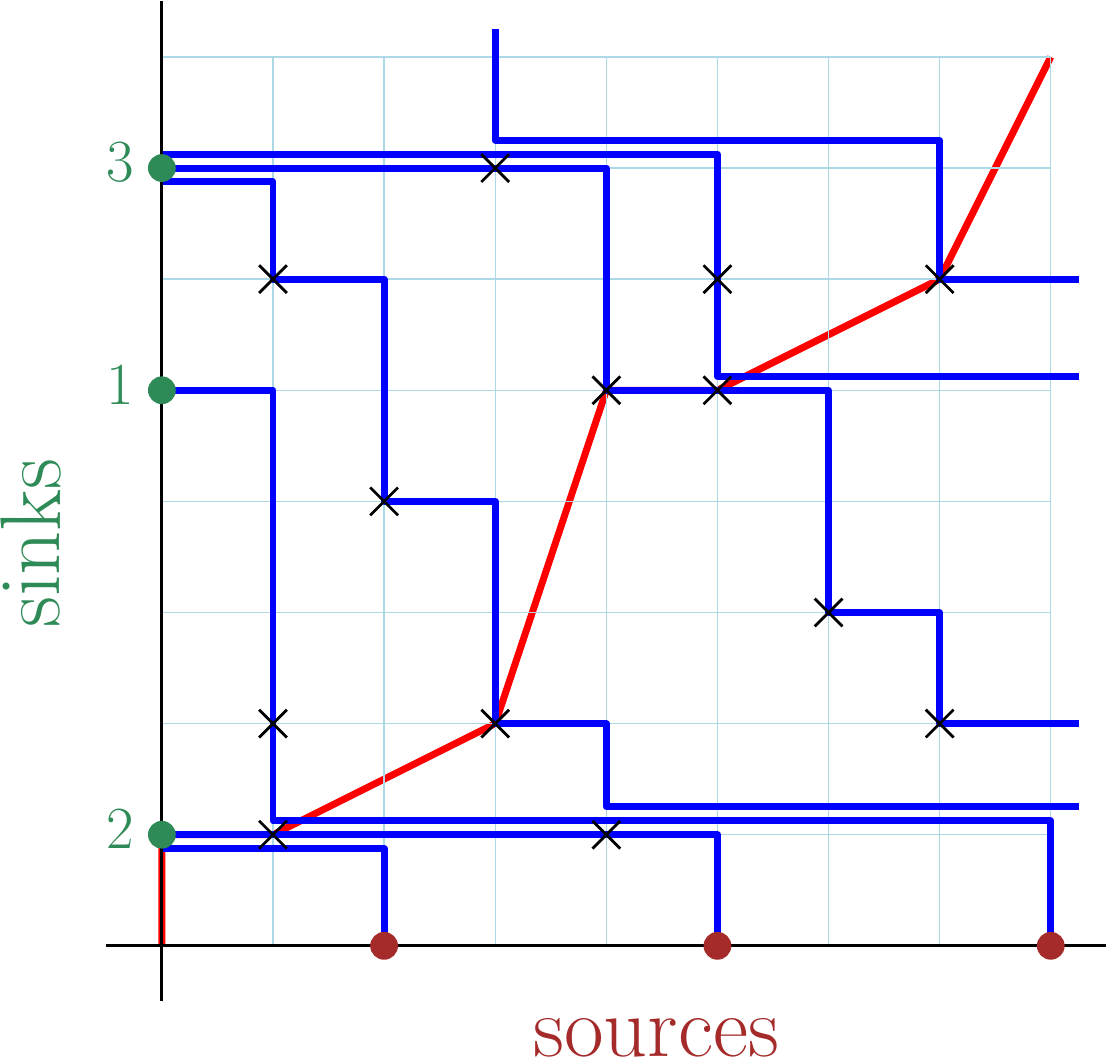}\\ 
Model 1, $\mathcal{L}_n^{(1)}=5$ & & Model 2, $\mathcal{L}_n^{(2)}=7$ 
\end{tabular}
\end{center}
\end{figure} 
 
 With a slight abuse of notation, we still denote 
 $(X_t^1)_{t\ge 0}:=(X^1_t(x), \ x\in \llbracket 1,n \rrbracket)_{t\ge 0}$ and $(X_t^2)_{t\ge 0}:=(X^2_t(x),\  x\in \llbracket 1,n \rrbracket)_{t\ge 0}$  the  two discrete time system of particles defined by 
  \begin{equation*}
      X_t^i(x)=\left\{
        \begin{array}{ll}
          1 &\mbox{if there is a Hammersley line (with sources and sinks) on the edge } \{(x,t),(x,t+1)\} \\
           0 & \mbox{otherwise.}
        \end{array}\right.
    \end{equation*}
Hence, the processes now start  from the configuration given by the sources \emph{i.e} $X^i_0(x)=\xi_0(x)$.

\bigskip

\begin{theo}[Stationarity of Hammersley's processes on a bounded interval, with sources and sinks]\label{Th:StatioSourcesPuits}\
\noindent For all $n$,
\begin{itemize}
\item {\bf Model 1.} For all $p,\alpha\in(0,1)$, the process $(X^1_t(x), x\in \llbracket 1,n \rrbracket)_{t\ge 0}$  is stationary if sources are i.i.d. $\mathrm{Ber}(\alpha)$ and sinks are i.i.d. $\mathrm{Ber}(\alpha^\star)$ (and sinks independent from sources) with
\begin{equation}\label{condmodel1}
\alpha^\star = \frac{p(1-\alpha)}{\alpha+p(1-\alpha)}.
\end{equation}
\item {\bf Model 2.} For all $p,\alpha\in(0,1)$ such that $\alpha> p$, the process $(X^2_t(x), x\in \llbracket 1,n \rrbracket)_{t\ge 0}$ is stationary if sources are i.i.d.  $\mathrm{Ber}(\alpha)$   and sinks are i.i.d. $(\mathrm{Geo}(\alpha^\star)-1)$ (i.e. $\mathbb{P}(\xi(0,t)=k)=\alpha^\star(1-\alpha^\star)^k$ for $k=0,1,\dots$) (and sinks independent from sources) with
\begin{equation}\label{condmodel2}
\alpha^\star=\frac{\alpha-p}{\alpha(1-p)}.
\end{equation}
(Note that $\alpha^\star \in (0,1)$ if $\alpha >p$.)
\end{itemize}
\end{theo}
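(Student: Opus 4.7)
The plan is to enrich the particle configuration by also tracking the horizontal flux of lines at each level, and then to prove stationarity by a Burke-type local balance verified vertex by vertex. For $1 \leq x \leq n+1$ and $1 \leq t \leq m$, I would set
$$
Y^i_t(x) = \#\{\text{Hammersley lines on the horizontal edge } \{(x-1,t),(x,t)\}\},
$$
so that $Y^1_t(x) \in \{0,1\}$ while $Y^2_t(x) \in \mathbb{N}$. The sinks then amount to the boundary condition $Y^i_t(1) = \xi_t(0)$ for every $t \geq 1$. A direct inspection of the Hammersley-line construction around a single cell shows that for each $1 \leq x \leq n$ and $0 \leq t \leq m-1$ there is a deterministic local map $\Phi^i$ with
$$
\big(X^i_{t+1}(x),\ Y^i_{t+1}(x+1)\big) = \Phi^i\big(X^i_t(x),\ Y^i_{t+1}(x),\ \xi_{t+1}(x)\big),
$$
expressing the states of the north and east edges of the unit cell at $(x,t+1)$ in terms of the states of its south and west edges together with the indicator of a point at $(x,t+1)$.

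The heart of the proof is a Burke-type local balance (Lemmas \ref{Lem:Burke1} and \ref{Lem:Burke2}): under the relation \eqref{condmodel1} or \eqref{condmodel2}, if the three inputs $X^i_t(x)$, $Y^i_{t+1}(x)$, $\xi_{t+1}(x)$ are mutually independent with marginals $\mathrm{Ber}(\alpha)$, the prescribed sink marginal ($\mathrm{Ber}(\alpha^\star)$ for Model~1 or $\mathrm{Geo}(\alpha^\star)-1$ for Model~2), and $\mathrm{Ber}(p)$, then the two outputs $X^i_{t+1}(x)$ and $Y^i_{t+1}(x+1)$ are again independent with those same marginals $\mathrm{Ber}(\alpha)$ and (respectively) the sink marginal. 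I would establish this claim by a direct computation, using the explicit description of $\Phi^i$ in each model; in Model~2 one uses the memoryless-type identity $\mathbb{P}(Y \geq k+1)=(1-\alpha^\star)\,\mathbb{P}(Y\geq k)$ for a $\mathrm{Geo}(\alpha^\star)-1$ variable, and the constraints \eqref{condmodel1}, \eqref{condmodel2} are precisely what is needed to match the two marginals simultaneously.

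From this vertex-wise statement I would deduce stationarity by induction along a lexicographic processing of the cells: row by row from $t=0$ upwards, and within each row from left to right. Initially the list of ``pending inputs'' consists of the sources $(X^i_0(x))_{x}$, the sinks $(Y^i_t(1))_{t}$ and the fresh points $(\xi_t(x))_{t\geq 1,x\geq 1}$, which are mutually independent with the correct marginals by assumption. Each inductive step feeds three pending inputs of a single cell into $\Phi^i$ and replaces them by the two outputs $X^i_{t+1}(x)$ and $Y^i_{t+1}(x+1)$, which become pending inputs of the next cells to be processed. Since the outputs are a measurable function of the three local inputs only, they remain independent of every other pending input; and by the local balance they are independent of each other and have the prescribed marginals. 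After all cells of row $t+1$ have been processed we therefore conclude that $\big(X^i_{t+1}(x)\big)_{1 \leq x \leq n}$ is i.i.d.\ $\mathrm{Ber}(\alpha)$, which yields stationarity of the process.

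The main obstacle is the local balance lemma itself, which requires first extracting the precise local rule $\Phi^i$ from the global line construction (straightforward for Model~1 where all edge states are in $\{0,1\}$, more delicate for Model~2 where several lines may share a horizontal edge) and then verifying output independence and the correct marginals. This latter verification is exactly the calculation that singles out \eqref{condmodel1} and \eqref{condmodel2} as the only compatibility relations between $\alpha$, $\alpha^\star$ and $p$.
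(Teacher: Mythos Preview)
Your proposal is correct and follows essentially the same route as the paper: the paper also introduces the horizontal edge variables (called $Y,Y'$ there), proves the local balance Lemmas \ref{Lem:Burke1} and \ref{Lem:Burke2} by writing down the explicit local map and checking the output law, and then concludes by a left-to-right induction along a single row. Your lexicographic ``pending inputs'' bookkeeping is a slightly more explicit rendering of the same induction the paper sketches in one sentence.
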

\begin{remark} In \cite{Sepp} (resp. in \cite{Sepp2}), Sepp\"al\"ainen associates to Model 1 (resp. Model 2) a particle system defined on the whole line $\Z$. One can check that the restrictions of these systems to $\llbracket 1, n\rrbracket$ have respectively the same transition probabilities that our processes $X^i_t$, $i=1,2$ when sinks and sources are distributed as in Theorem \ref{Th:StatioSourcesPuits}.  Hence,
\begin{itemize}
\item For Model 1 (resp. Model 2), Theorem \ref{Th:StatioSourcesPuits} should be compared to  Lemma 2.1 in \cite{Sepp} (resp. Proposition 1 in \cite{Sepp2}) which states that Bernoulli product measures are stationary for the analogous model on the infinite line.
\item The condition $\alpha> p$ in Model 2 reminds the condition on the density of the initial configuration needed to define the process on $\Z$ in \cite{Sepp2}.
\item The theorem states stationarity in time (\emph{i.e.} from bottom to top in our figures). In fact, the proof also shows stationarity from left to right.
\end{itemize}
\end{remark}

Let us collect a few consequences for further use.
Let $\mathcal{L}_{(n,m)}^{(i),\alpha,\beta}$ be the number of Hammersley's lines in Model $i$ with parameters $\alpha$ (sources), $\beta$ (sinks)  in the box $[0,n]\times[0,m]$ and let  $\mathcal{T}_{(n,m)}^{(i),\alpha,\beta}$ be the number of Hammersley's lines that leave the same box from the top.
Then, for $i=1,2$ and every $\alpha,\beta\in [0,1]$
\begin{align}
\nonumber\mathcal{L}_{(n,m)}^{(i),0,0}& = \mathcal{T}_{(n,m)}^{(i),0,0} \stackrel{\text{d}}{=} L_{(n,m)}^{(i)}, \\
\label{Eq:DominationPuits2} \mathcal{L}_{(n,m)}^{(i),\alpha,\beta}&= \mathcal{T}_{(n,m)}^{(i),\alpha,\beta}+\#\set{\text{sinks between $1$ and $m$}}.
\end{align}
This relies on the fact that any lines has to exit by the top or by a sink.
Note that in the right-hand side of \eqref{Eq:DominationPuits2} the two terms are not independent. 
Moreover, using the interpretation of $L_{(n,m)}^{(i)}$ and  $\mathcal{L}_{(n,m)}^{(i),\alpha,\beta}$ as the length of a longest subsequence (\emph{cf}. \eqref{eq:defmax} and \eqref{eq:defmax2}), we  have
\begin{equation}
\label{Eq:DominationPuits1} L_{(n,m)}^{(i)} \le \mathcal{L}_{(n,m)}^{(i),\alpha,\beta}.
\end{equation}
Besides, $\mathcal{T}_{(n,m)}^{(i),\alpha,\beta}$ being by definition equal to the number of particles at time $m$, Theorem \ref{Th:StatioSourcesPuits} implies that
\begin{equation}\label{Eq:Binom}
\mathcal{T}_{(n,m)}^{(i),\alpha,\alpha^\star} \stackrel{\text{law}}{=} \mathrm{Binom}(n,\alpha).
\end{equation}

\begin{proof}
We aim to prove Theorem \ref{Th:StatioSourcesPuits}.
We first prove for both models a local balance property, in Lemmas 1 and 2. These lemmas are elementary but do not seem to be written elsewhere, and form the heart of our proof. 

\noindent{\bf Model 1.}\\
We first focus on what happens around a single point $(x,t)$. Recall that $\xi_t(x)$ is a Bernoulli$(p)$. Denote by $X$ (resp. $Y,X',Y'$) be the indicator that a Hammersley's line hits $(x,t)$ from the bottom (resp. left,top,right).

\begin{lem}[Local balance for Model 1]\label{Lem:Burke1} The random variables $(X',Y')$ are measurable with respect to $(X,Y,\xi_t(x))$ and we have
$$
\left((X,Y,\xi_t(x))\stackrel{\text{(d)}}{=} \mathrm{Ber}(\alpha)\otimes \mathrm{Ber}(\alpha^\star)\otimes \mathrm{Ber}(p)\right)
\Rightarrow
\left((X',Y')\stackrel{\text{(d)}}{=} \mathrm{Ber}(\alpha)\otimes \mathrm{Ber}(\alpha^\star)\right)
$$
where $\alpha^\star$ is as in \eqref{condmodel1}.
\end{lem}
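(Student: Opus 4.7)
The plan is to first make explicit the local deterministic update $f\colon\{0,1\}^3\to\{0,1\}^2$ associated with the broken-line construction at the vertex $(x,t)$, and then to verify the stated product distribution by a direct computation over the eight input triples.

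For the first step, I will enumerate the admissible local line patterns around an interior vertex in Model~1. Since at most one Hammersley line passes through a vertex, the only possibilities are: no line at all; a vertical straight passage ($X=X'=1$); a horizontal straight passage ($Y=Y'=1$); a south-to-east turn ($X'=Y'=1$); and a west-to-south turn ($X=Y=1$). The highest-non-increasing-path prescription forces every south-to-east turn to sit on a point of $\xi$, whereas the four other patterns are compatible with both values of $\xi_t(x)$. Going through the eight triples $(X,Y,\xi)\in\{0,1\}^3$ and picking the unique geometrically compatible pattern yields the complete transition table, from which the measurability claim follows immediately.

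For the second step, I will compute each of the four output probabilities $\mathbb{P}(X'=i,Y'=j)$ by summing the product input law over the preimage under $f$. The two off-diagonal probabilities factor immediately as $\alpha(1-\alpha^\star)$ and $(1-\alpha)\alpha^\star$, since the corresponding preimages only differ in the value of $\xi$. The two remaining probabilities both reduce to the single algebraic identity
\[
\alpha\alpha^\star = p(1-\alpha)(1-\alpha^\star),
\]
which is a direct rearrangement of the definition \eqref{condmodel1} of $\alpha^\star$. Substituting it in produces $\alpha\alpha^\star$ and $(1-\alpha)(1-\alpha^\star)$ respectively, yielding the desired product marginals and independence of $(X',Y')$.

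The main obstacle I expect is the first step: reading off the eight-row table correctly from the geometry. In particular one must recognize that both $(X,Y,\xi)=(1,1,0)$ and $(X,Y,\xi)=(1,1,1)$ correspond to a single west-to-south turn, and hence to the same absorbing output $(0,0)$, so that the at-most-one-line-per-vertex rule is preserved locally. Once the table is fixed, the remainder is a one-line algebraic check.
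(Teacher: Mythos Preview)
Your proposal is correct and follows essentially the same approach as the paper: the paper also derives the explicit local rule $(X',Y')=f(X,Y,\xi_t(x))$---written there as a compact three-case formula rather than an eight-row table---and then leaves the distributional check as an easy verification. Your write-up is simply more detailed, in particular by isolating the identity $\alpha\alpha^\star = p(1-\alpha)(1-\alpha^\star)$ equivalent to \eqref{condmodel1}, which the paper leaves implicit.
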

\begin{center}
\includegraphics[height=32mm]{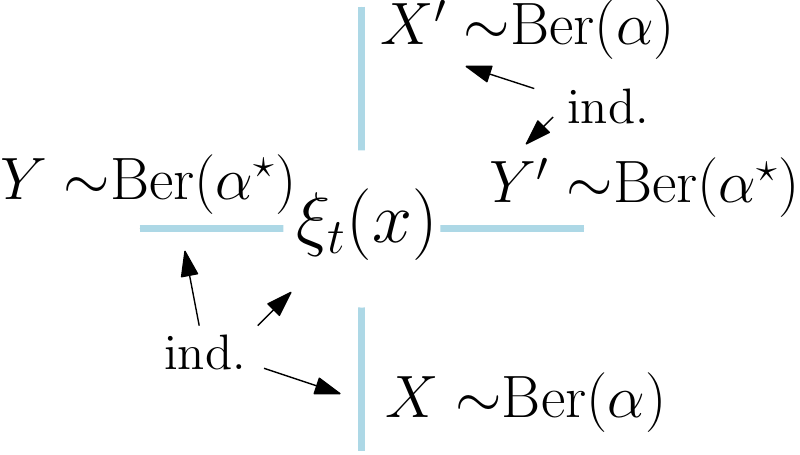}
\end{center}
\begin{proof}[Proof of Lemma \ref{Lem:Burke1}]Recalling that, in Model 1, Hammersley's lines are non increasing lines which do not touch each other and noticing that $X'=Y'=1$ iff $ X=Y=0$ and $\xi_t(x)=1$, we get that
\begin{equation*}
      (X',Y')=\left\{
        \begin{array}{ll}
          (X,Y) & \mbox{if\ }X\neq Y \\
          (1,1) & \mbox{if } X=Y=0 \mbox{ and } \xi_t(x)=1.\\
          (0,0)  & \mbox{otherwise.}
        \end{array}\right.
    \end{equation*}
 Hence, we see that  $(X',Y')\in \sigma(X,Y,\xi_t(x))$ and one can easily check that if $(X',Y')\sim \mathrm{Ber}(\alpha)\otimes \mathrm{Ber}(\alpha^\star)$ with $\alpha^\star$ is as in \eqref{condmodel1},  then $(X',Y')\sim \mathrm{Ber}(\alpha)\otimes \mathrm{Ber}(\alpha^\star)$.
\end{proof}
We now explain how Lemma \ref{Lem:Burke1} proves Theorem \ref{Th:StatioSourcesPuits} for Model 1.
For every $(x,1)$ define the corresponding $(X'(x),Y'(x))$ be the \emph{output} of $(x,1)$. Independence of sources and sinks and Lemma \ref{Lem:Burke1} ensure that the output of $(1,1)$ is independent of sources at $(2,0),(3,0),\dots$ and sinks at $(0,2),(0,3),\dots$. Then a simple induction on $x$  proves that the random variables $(X'(x),x\in \llbracket 1,n \rrbracket)$ are distributed as Ber$(\alpha)$.

\noindent{\bf Model 2.} The strategy is similar:
\begin{lem}[Local balance for Model 2]\label{Lem:Burke2}The random variables $(X',Y')$ are measurable with respect to $(X,Y,\xi_t(x))$ and we have
$$
\left((X,Y,\xi_t(x))\stackrel{\text{(d)}}{=} \mathrm{Ber}(\alpha)\otimes( \mathrm{Geo}(\alpha^\star)-1)\otimes \mathrm{Ber}(p)\right)
\Rightarrow
\left((X',Y')\stackrel{\text{(d)}}{=} \mathrm{Ber}(\alpha)\otimes (\mathrm{Geo}(\alpha^\star)-1)\right)
$$
where $\alpha^\star$ is as in \eqref{condmodel2}.
\end{lem}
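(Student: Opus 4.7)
My strategy mirrors the proof of Lemma~\ref{Lem:Burke1} for Model~1. The first step is to write $(X',Y')$ explicitly as a function of $(X,Y,\xi_t(x))$ by unwinding the construction of Hammersley's lines at the single vertex $(x,t)$. The new feature in Model~2 is that the partial order only requires $y\le y'$, so several lines may share the same horizontal edge: $Y$ and $Y'$ take values in $\mathbb{N}$ while $X$ and $X'$ remain in $\{0,1\}$ (vertical edges still carry at most one line). Examining the possible configurations I expect to get a case analysis of the following form. When $\xi_t(x)=0$ the situation at $(x,t)$ is a simple pass-through and $(X',Y')=(X,Y)$. When $\xi_t(x)=1$ a corner is created: if $X=0$ the new point produces a fresh vertical line going up, giving $(X',Y')=(1,Y)$, whereas if $X=1$ the incoming vertical line is absorbed into the horizontal pile and replaced by a new one, giving $(X',Y')=(1,Y+1)$. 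The exact form of this reshuffling has to be checked carefully against the recursive "highest non-increasing path through the current minima" construction used to build the lines in Section~\ref{sectionsinksources}. In any case, the formula makes clear that $(X',Y')\in\sigma(X,Y,\xi_t(x))$.

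The second step is the distributional identity. Assuming $(X,Y,\xi_t(x))\sim\mathrm{Ber}(\alpha)\otimes(\mathrm{Geo}(\alpha^\star)-1)\otimes\mathrm{Ber}(p)$ with independent coordinates, I would compute $\mathbb{P}(X'=a,\,Y'=b)$ for every $(a,b)\in\{0,1\}\times\mathbb{N}$ by summing over the preimages of the update map. Because of the geometric tail of $Y$, the sums collapse to closed expressions in $(1-\alpha^\star)^b$, and matching them against $\mathbb{P}(\mathrm{Ber}(\alpha)=a)\,\mathbb{P}(\mathrm{Geo}(\alpha^\star)-1=b)$ produces a single algebraic relation between $\alpha$, $\alpha^\star$ and $p$. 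This relation should rearrange to $\alpha^\star=(\alpha-p)/(\alpha(1-p))$, which is precisely \eqref{condmodel2}; the side assumption $\alpha>p$ is exactly what ensures $\alpha^\star\in(0,1)$ so that $\mathrm{Geo}(\alpha^\star)-1$ is an honest probability law.

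The main obstacle I anticipate is the first step rather than the second one. For Model~1 the update map fits on three lines because both vertical \emph{and} horizontal edges carry at most one line; for Model~2 one has to track an arbitrary horizontal stack through the recursive construction and justify the precise way a new corner interacts with it. Once the update map is pinned down, the distributional check is short bookkeeping of the same flavour as the computation sketched for Lemma~\ref{Lem:Burke1}. Finally, the passage from this single-site balance to the global stationarity claim of Theorem~\ref{Th:StatioSourcesPuits} proceeds exactly as in the paragraph following Lemma~\ref{Lem:Burke1}: an induction on $x\in\llbracket 1,n\rrbracket$, using independence of the sources, the sinks, and the family $(\xi_t(x))$, propagates the product law along the bottom row and then upwards in time.
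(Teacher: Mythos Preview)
Your plan has the right architecture, but the update map you guess in the first step is wrong, and since the distributional check is a routine consequence of that map, the whole argument would collapse.

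The error is visible already from line conservation. At an interior vertex $(x,t)$ the Hammersley lines only pass through; they are neither created nor destroyed there, regardless of whether $\xi_t(x)=1$. Counting lines entering from the top and the left versus lines leaving through the bottom and the right gives the balance $X'+Y=X+Y'$. Your formula violates this in two places: for $\xi_t(x)=1$, $X=0$ you propose $(X',Y')=(1,Y)$, which gives $1+Y\neq 0+Y$; for $\xi_t(x)=1$, $X=1$ you propose $(X',Y')=(1,Y+1)$, which gives $1+Y\neq 1+Y+1$. More subtly, the case $\xi_t(x)=0$ is \emph{not} a pure pass-through when $X=1$ and $Y\ge 1$: one of the incoming horizontal lines is the one that turns south at column $x$ (this is forced by the ``highest non-increasing path'' rule), so $X'=0$ and $Y'=Y-1$, not $(X',Y')=(1,Y)$.

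The correct rule, which the paper derives from the two observations ``$X'+Y=X+Y'$'' and ``$X'=1$ iff $\xi_t(x)=1$ or $(X,Y)=(1,0)$'', is
\[
(X',Y')=
\begin{cases}
(1,0) & \text{if }(X,Y)=(1,0),\\
(\xi_t(x),\,Y-X+\xi_t(x)) & \text{otherwise.}
\end{cases}
\]
You can also see directly that your map cannot reproduce the target law: under your rule $\{X'=0\}=\{\xi_t(x)=0,\ X=0\}$, which has probability $(1-p)(1-\alpha)$, not $1-\alpha$. Once the correct map is in hand, your second step (summing over preimages and matching against $\mathrm{Ber}(\alpha)\otimes(\mathrm{Geo}(\alpha^\star)-1)$) goes through exactly as you describe and yields \eqref{condmodel2}; the paper does precisely this computation for the representative case $X'=1$, $Y'=k\ge 1$.
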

\begin{center}
\includegraphics[height=32mm]{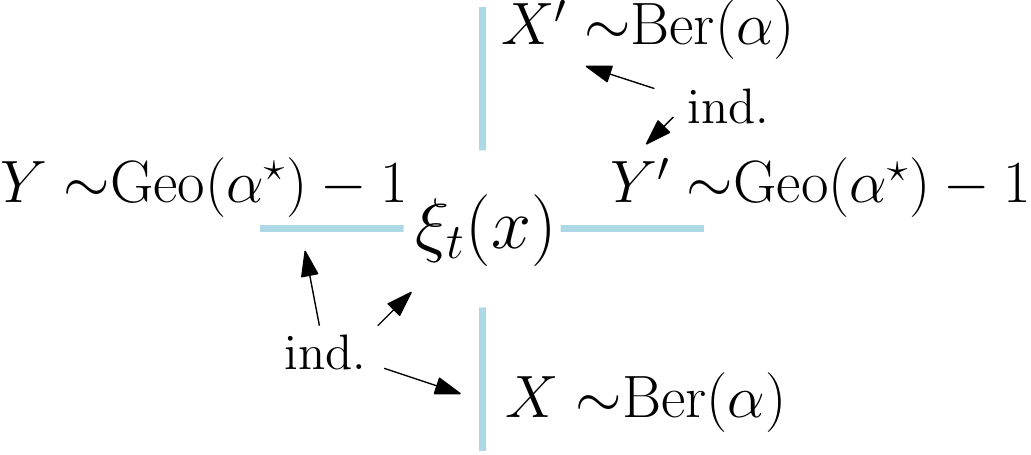}
\end{center}

\begin{proof}[Proof of Lemma \ref{Lem:Burke2}] Recall that $X,X'\in \{0,1\}$ whereas $Y,Y'\in \Z^+$. Besides, using that Hammersley's lines are non-increasing, the balance between incoming and outcoming lines at a site yields  $X'+Y=X+Y'$. Moreover, $X'=1$ iff ($\xi_t(x)=1$ or $(X,Y)=(1,0)$).

We observe that the different cases can be summed up in
\begin{equation*}
      (X',Y')=\left\{
        \begin{array}{ll}
          (1,0) & \mbox{if\ }(X,Y)=(1,0) \\
          (\xi_t(x),Y-X+\xi_t(x))& \mbox{otherwise.}
        \end{array}\right.
    \end{equation*}
One can easily check that this equality implies Lemma \ref{Lem:Burke2}.
For instance, for $k\ge 1$, $\mathbb{P}(X=1,Y=k)=\alpha\alpha^\star (1-\alpha^\star )^k$ should be equal to
\begin{align*}
\mathbb{P}(X'=1,Y'=k)&=\mathbb{P}(\xi_t(x)=1,X=0,Y=k-1)+ \mathbb{P}(\xi_t(x)=1,X=1,Y=k)\\
&= p(1-\alpha)\alpha^\star(1-\alpha^\star)^{k-1}+p\alpha\alpha^\star(1-\alpha^\star)^{k}\\
&=p(1-\alpha^\star)^{k-1}\alpha^\star\left[(1-\alpha)+\alpha(1-\alpha^\star)\right],
\end{align*}
which requires \eqref{condmodel2}.
\end{proof}

Theorem  \ref{Th:StatioSourcesPuits} for Model 2 then follows from Lemma \ref{Lem:Burke2} in the same way
as before Theorem  \ref{Th:StatioSourcesPuits} for Model 1  follows from Lemma \ref{Lem:Burke1}.
\end{proof}

\section{Law of Large Numbers in Hammersley's processes}\label{sectionLLN}
In this section, we explain how Theorem \ref{Th:StatioSourcesPuits}  on the stationary measure of the processes with sources and sinks implies Theorem \ref{TheoLLN}. 

\begin{proof}[Proof of Theorem \ref{TheoLLN}]

\subsubsection*{Trivial case.} 

We first  consider Model 1 in the case  $p \ge \min\{a/b,b/a\}$.
The upper bound is straightforward. 
The lower bound can be proven as follows.
Let us consider for example the case $p \ge ab^{-1}$.
We have to prove:
\begin{equation}\label{e:hunter}
\liminf_{n \to \infty} \frac{L_{(an,bn)}^{(1)}}{n} \ge a.
\end{equation}
One can build an increasing path as follows.
The first point $(1,y_1)$ of the path is the lowest point of $\xi$ having first coordinate equal to $1$.
The second point $(2,y_2)$ of the path is the lowest point of $\xi$ having first coordinate equal to $2$
and second coordinate strictly larger than $y_1$.
The other points are defined in a similar fashion.
As $pb \ge a$, a study of this path easily provides \eqref{e:hunter}.
The case $p \ge ba^{-1}$ can be handled in a similar way.

The same strategy treats the case $p\geq a/(a+b)$ in Model $2$. 

\subsubsection*{Non trivial case}

We only need to compute $\lim \frac{1}{n} \mathbb{E}[L_{(an,bn)}^{(i)}]$ since almost sure  convergence follows from superadditivity.

\noindent {\bf Model 1. Upper bound.} We assume that
$p\geq \min \{a/b,b/a\}$.  For any $\alpha \in (0,1)$, taking $\alpha^\star$ as in \eqref{condmodel1} and using    \eqref{Eq:DominationPuits2}, \eqref{Eq:DominationPuits1} and \eqref{Eq:Binom}, we get
\begin{eqnarray}
\frac{1}{n}\mathbb{E}[L_{(an,bn)}^{(1)}]&\leq&  \frac{1}{n}\mathbb{E}[\mathcal{L}_{(an,bn)}^{(1),\alpha,\alpha^\star}] \le  \frac{1}{n}\mathbb{E}[\mathcal{T}_{(an,bn)}^{(1),\alpha,\alpha^\star}] + \frac{1}{n}\mathbb{E}[\#\set{\text{sinks between $1$ and $bn$}}] \notag\\
&=& a\alpha + b\alpha^\star
= a\alpha +b\frac{p(1-\alpha)}{\alpha+p(1-\alpha)} =:\phi^{(1)}(a,b,\alpha). \label{eq:tgv}
\end{eqnarray}
The latter is minimized for
\begin{equation}\label{Eq:OptiRectangle}
\alpha(a,b):=\frac{\sqrt{p}\sqrt{\tfrac{b}{a}}-p}{1-p},\qquad \alpha^\star(a,b):=\frac{\sqrt{p}\sqrt{\tfrac{a}{b}}-p}{1-p},
\end{equation}
which both are in $(0,1)$ if $p<\min \{a/b,b/a\}$.
This yields
$$\frac{1}{n}\mathbb{E}[L_{(an,bn)}^{(1)}]\leq \phi^{(1)}(a,b,\alpha(a,b))= \frac{\sqrt{p}(2\sqrt{ab}-(a+b)\sqrt{p})}{1-p}.$$

\medskip

\noindent {\bf Model 2. Upper bound.}
We 	assume that  $p\geq a/(a+b)$. Using \eqref{Eq:Binom}   with $\alpha^\star=\frac{\alpha-p}{\alpha(1-p)}$ and that $\mathbb{E}[\mathrm{Geo}(\alpha^\star)-1]=\frac{1}{\alpha^\star}-1$, we get
\begin{eqnarray*}
\frac{1}{n}\mathbb{E}[L_{(an,bn)}^{(2)}]&\leq&  \frac{1}{n}\mathbb{E}[\mathcal{T}_{(an,bn)}^{(i),\alpha,\alpha^\star}] + \frac{1}{n}\mathbb{E}[\#\set{\text{sinks between $1$ and $bn$}}]\\
&=& a\alpha+ b(\frac{1}{\alpha^\star}-1)
= a\alpha +b\frac{p(1-\alpha)}{\alpha-p}.
\end{eqnarray*}
We minimize the latter by taking
$$
\alpha=p+\sqrt{\frac{b}{a}p(1-p)}.
$$
Note that this choice is allowed since for $p\in(0,a/(a+b))$ then $0<p<\alpha<1$.

\medskip

\noindent {\bf Model 1 and 2. Lower bound.}

In \cite{Sepp, Sepp2} the lower bound was obtained with a convexity argument on the scaling limit.
Instead, we adapt a more probabilist argument due to \rvsv (proof of Theorem 4.1 in \cite{SidoBroken2}) for
the closely related model of last passage percolation with geometric weights.
Here, we only give the details for Model 1 but the same argument applies for Model 2
(there are only minor modifications due mainly to the fact that sinks have no more a Bernoulli distribution but a geometric distribution).

Let us consider Model 1 on the rectangle $[0,an]\times [0,bn]$ with sinks and sources with optimal source intensity $\alpha=\alpha(a,b)$ and $\alpha^\star=\alpha^\star(a,b)$ (see \eqref{Eq:OptiRectangle}).
For $\eps \in  [0,1]$, denote by $L_{(an,bn)}^{(1)}(\eps)$ the length of the largest non-decreasing subsequence defined by
\begin{multline*}
L_{(an,bn)}^{(1)}(\eps)=\max\left\{L; (i_1,0),\ldots, (i_k,0),(i_{k+1},j_{k+1}),\dots,(i_L,j_L)\in \xi,\right.\\
\left. 0<i_1< \dots<i_k  \le an\eps < i_{k+1}<\ldots< i_L \le an \text{ and } 0<j_{k+1}<\dots <j_L \le bn\right\}.
\end{multline*}
Here is an example where $L_{(an,bn)}^{(1)}(\eps)=4$.
Note that because of the constraint $L_{(an,bn)}^{(1)}(\eps)$ is smaller than the number of Hammersley's lines 
$\mathcal{L}_{(an,bn)}^{(1),\alpha,\alpha^\star}$.
\begin{center}
\includegraphics[width=6cm]{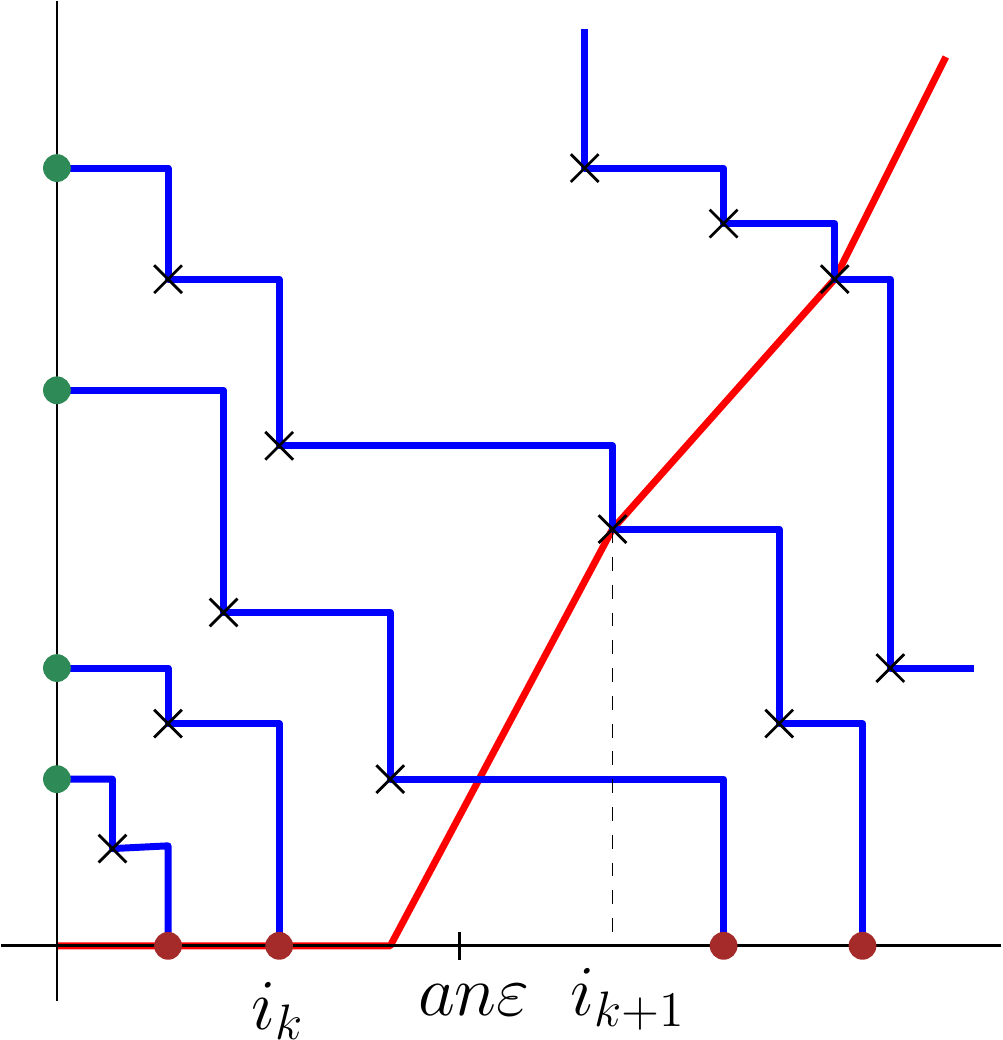}
\end{center}

Recall that $\phi^{(1)}$ has been defined in \eqref{eq:tgv}.
\begin{lem}[Largest subsequence using some sources]\label{Lem:OptiBord}
There exist positive and non-decreasing 
functions $f$, $g$ on $(0,1]$, which depend on $a,b$, such that
$$\mathbb{P}\left(L_{(an,bn)}^{(1)}(\eps) \ge n\left(\phi^{(1)}(a,b,\alpha(a,b))-f(\eps)\right)\right)\le \exp(-ng(\eps)).$$
\end{lem}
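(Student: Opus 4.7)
I would first decompose any path counted in $L_{(an,bn)}^{(1)}(\eps)$ into its source portion, of length at most $N_\eps := \#\set{\text{sources in } [1, an\eps]}$, and its bulk portion, which is a strictly increasing path contained in the sub-box $(an\eps, an]\times[1, bn]$. This gives the deterministic bound
\[
L_{(an,bn)}^{(1)}(\eps) \le N_\eps + L^{(1)}_{(a(1-\eps)n,\, bn)},
\]
where the two terms on the right are independent and the second is distributed as in the original sourceless model on the rescaled box. I would then augment this sub-box with a \emph{fresh} layer of artificial sinks and sources at intensities $\alpha' := \alpha(a(1-\eps), b)$ and $(\alpha')^\star$ (see \eqref{condmodel1}), so that by \eqref{Eq:DominationPuits1} and \eqref{Eq:DominationPuits2} the bulk term is dominated by $\mathcal{T}^{(1),\alpha',(\alpha')^\star}_{(a(1-\eps)n,\,bn)} + \#\set{\text{artificial sinks}}$.

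All three summands on the right are then binomial: by Theorem~\ref{Th:StatioSourcesPuits} and \eqref{Eq:Binom}, $\mathcal{T}$ is $\mathrm{Binom}(\lfloor a(1-\eps)n\rfloor,\alpha')$ and the artificial sinks $\mathrm{Binom}(\lfloor bn\rfloor,(\alpha')^\star)$, while $N_\eps \sim \mathrm{Binom}(\lfloor an\eps\rfloor,\alpha(a,b))$ by assumption on the sources. Writing $\psi(a,b):=\phi^{(1)}(a,b,\alpha(a,b))$, the total mean equals $nF(\eps)$ with
\[
F(\eps) := a\eps\,\alpha(a,b) + \psi(a(1-\eps), b),
\]
and a union bound over Hoeffding's inequality for each binomial marginal (the joint distribution of the three is irrelevant) gives, for every $t > 0$,
\[
\mathbb{P}\bigl(L^{(1)}_{(an,bn)}(\eps) \ge nF(\eps) + tn\bigr) \le 3\exp\bigl(-c(a,b,\eps)\,nt^2\bigr).
\]

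The main obstacle, and the heart of the lemma, is the strict inequality $F(\eps) < \psi(a,b)$ on $(0,1]$ with a quantitative gap. Since $\alpha(a,b)$ minimizes $\phi^{(1)}(a,b,\cdot)$ and $\phi^{(1)}$ is affine in its first argument, an envelope identity yields $\partial_a\psi(a,b) = \alpha(a,b)$, so $F'(0)=0$. The explicit formula \eqref{Eq:OptiRectangle} gives $\partial_a\alpha(a,b) < 0$, hence
\[
F'(\eps) = a\bigl(\alpha(a,b) - \alpha(a(1-\eps),b)\bigr) < 0 \quad \text{on } (0,1),
\]
so that $F$ is strictly decreasing on $[0,1]$. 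Setting $f(\eps) := \tfrac12(\psi(a,b) - F(\eps))$ and choosing $t = f(\eps)$ in the Hoeffding bound then proves the lemma with $g(\eps)$ proportional to $f(\eps)^2$; both are positive and non-decreasing. One delicate point is the range of $\eps$ for which $\alpha(a(1-\eps), b)$ stays in $(0,1)$: beyond the transition $\eps = 1 - bp/a$ the sub-problem enters the trivial regime and one should instead use the deterministic bound $L^{(1)}_{(a(1-\eps)n,bn)} \le a(1-\eps)n$ (an increasing path has distinct abscissas), with the same monotonicity argument showing that the analogously defined $F$ remains strictly below $\psi(a,b)$.
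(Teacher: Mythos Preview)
Your proof is correct and follows essentially the same route as the paper: the decomposition $L_{(an,bn)}^{(1)}(\eps)\le N_\eps + L^{(1)}_{(a(1-\eps)n,bn)}$, the domination of the bulk term by a stationary model with the optimal parameters $\alpha(a(1-\eps),b),\alpha^\star(a(1-\eps),b)$, and the conclusion via Hoeffding on the three binomial pieces are exactly the paper's $I_1+I_2\le I_1+I_3$ argument. The only cosmetic difference is how the strict gap $F(\eps)<\psi(a,b)$ is obtained---you differentiate via the envelope identity $\partial_a\psi=\alpha$, whereas the paper reaches the same conclusion in one line from the affinity of $\phi^{(1)}$ in $a$ together with the uniqueness of its minimizer in $\alpha$ (and then takes $f(\eps)=\tfrac12\min_{[\eps,1]}d$ to force monotonicity, which your argument renders unnecessary since your $F$ is already decreasing).
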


\begin{proof}[Proof of Lemma \ref{Lem:OptiBord}]
We write $L_{(an,bn)}^{(1)}(\eps)=I_1+I_2$ with
\begin{eqnarray*}I_1&:=&\max\left\{k; (i_1,0),\ldots, (i_k,0)\in \xi, i_1< \dots<i_k \le an\eps\right\}\\
I_2&:=&\max\left\{k; (i_1,j_1),\ldots, (i_k,j_{k})\in \xi,an\eps < i_1< \dots<i_k \le an\text{ and } 0<j_{1}<\dots <j_k \le bn\right\}.
\end{eqnarray*}
The random variables $I_1$ and $I_2$ are independent,  $I_1$ is binomially distributed with parameters $(\lfloor an\eps \rfloor, \alpha(a,b))$.
The random variable $I_2$ has the law of $L^{(1)}_{(an(1-\eps),bn)}$, thus is dominated by a random variable $I_3$ with law $\mathcal{L}_{(an(1-\eps),bn)}^{(1),\alpha(a(1-\eps),b),\alpha^\star(a(1-\eps),b)}$.
Using \eqref{eq:tgv}, we get:
\begin{eqnarray*}
\frac{1}{n}\mathbb{E}\left[L_{(an,bn)}^{(1)}(\eps) \right]
 & \le & \frac{1}{n}\mathbb{E}\left[I_1+I_3 \right] \\
  & \le & a\eps\alpha(a,b) + \phi^{(1)}\Big(a(1-\eps),b,\alpha(a(1-\eps),b)\Big),
\end{eqnarray*}
where, by convention, $\alpha(a',b')=1$ when $p \ge a'/b'$ and $\alpha(a',b')=0$ when $p \ge b'/a'$.
One can check that for $p<\min\{a/b,b/a\}$ and $a',b'>0$  such that $a/b \neq  a'/b'$ we have
$$
\phi^{(1)}(a',b',\alpha(a,b)) > \phi^{(1)}(a',b',\alpha(a',b')).
$$
This is due to the fact that $\alpha(a,b) \neq \alpha(a',b')$ and that there is a unique $\alpha \in [0,1]$ which minimizes $\phi(a',b',\alpha)$. Thus,
\begin{eqnarray*}
a\eps\alpha(a,b) + \phi^{(1)}\Big(a(1-\eps),b,\alpha(a(1-\eps),b)\Big)&<&a\eps\alpha(a,b) + \phi^{(1)}\Big(a(1-\eps),b,\alpha(a,b)\Big)\\
&= & a\eps\alpha(a,b) + a(1-\eps)\alpha(a,b) + b\alpha^\star(a,b)  \\
 & = & a\alpha(a,b) + b\alpha^\star(a,b) \\
 & = & \phi^{(1)}(a,b,\alpha(a,b)).
\end{eqnarray*}
Denote by $d(\eps)$ the difference between the right and left hand side of the inequality. The function $d$ is positive on $(0,1]$ and continuous. We set 
$f(\eps)=\tfrac 1 2 \min\{d(\delta), \delta\in [\eps,1]\}$ which is non-decreasing and positive. Then, we have
\begin{eqnarray*}
\mathbb{P}\left(L_{(an,bn)}^{(1)}(\eps) \ge n\left(\phi^{(1)}(a,b,\alpha(a,b))-f(\eps)\right)\right) 
 &\le &  \mathbb{P}\left( I_1+I_3 \ge \mathbb{E}(I_1+I_3) + n  \big( d(\eps)-f(\eps) \big) \right)\\
 &\le & \mathbb{P}\left( I_1+I_3 \ge \mathbb{E}(I_1+I_3) + n  f(\eps) \right).
\end{eqnarray*}
Recalling that $I_1$ is  binomially distributed  and $I_3$ is the sum of two binomial random variables, 
function $g$ is  obtained by applying Hoeffding's 
 inequality to  $I_1$ and $I_3$.

\end{proof}

We still consider  Model 1 on the rectangle $[0,an]\times [0,bn]$ with sinks and sources with optimal source intensity $\alpha=\alpha(a,b)$ and $\alpha^\star=\alpha^\star(a,b)$. 
Let $\pi_n$ be an optimal path for $\mathcal{L}_{(an,bn)}^{(1),\alpha,\alpha^\star}$.
If there are several optimal paths, we choose one of them in an arbitrary way.
Define $D_n$ as the number of sources $(i,j)\in \pi_n$ with $j=0$.

\begin{lem}[Optimal paths do not take many sources]\label{Lem:MajoPointsDuBord} There exists a positive function $h$ on $(0,1]$ such that for 
and any $\delta \in (0,1]$ and for any $n$ large enough 
$$
\mathbb{P}\left( D_n>an\delta\right)\le \exp(-nh(\delta)).
$$
In particular $D_n/n$ converges to 0 in $L^1$.
\end{lem}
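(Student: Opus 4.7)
The plan is to leverage Lemma~\ref{Lem:OptiBord} in combination with the concentration of $\mathcal{L}_{(an,bn)}^{(1),\alpha,\alpha^\star}$ around its mean $n\phi^{(1)}(a,b,\alpha(a,b))$: Lemma~\ref{Lem:OptiBord} says that any ``split'' path using a positive fraction of the axis as sources is strictly shorter than $n\phi^{(1)}$, so if $\pi_n$ collected that many sources, $\mathcal{L}^{(1),\alpha,\alpha^\star}_{(an,bn)}$ would have to be much smaller than its mean, which is unlikely.

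The first step is a structural identity linking $D_n$ to $L_{(an,bn)}^{(1)}(\cdot)$. Let $u$ be the number of initial $(1,0)$-steps of $\pi_n$, so that $\pi_n$ leaves the $x$-axis at $(u,0)$. Because $\pi_n$ is optimal, it necessarily collects every source in $[1,u]$, whence $D_n$ coincides with the total number of sources in $[1,u]$, and in particular $u\ge D_n$. Setting $\eta:=u/(an)$, the path $\pi_n$ is then exactly of the form appearing in the definition of $L_{(an,bn)}^{(1)}(\eta)$, so that
$$L_{(an,bn)}^{(1)}(\eta)\ \ge\ \mathcal{L}_{(an,bn)}^{(1),\alpha,\alpha^\star},$$
and on $\{D_n>an\delta\}$ the (random) index $\eta$ satisfies $\eta>\delta$.

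Second, by \eqref{Eq:DominationPuits2} and \eqref{Eq:Binom}, $\mathcal{L}_{(an,bn)}^{(1),\alpha,\alpha^\star}$ is the sum of two binomial random variables with parameters $(an,\alpha(a,b))$ and $(bn,\alpha^\star(a,b))$ whose expectations add up to $n\phi^{(1)}(a,b,\alpha(a,b))$, so two applications of Hoeffding's inequality give, for some $c(\delta)>0$,
$$\mathbb{P}\Bigl(\mathcal{L}_{(an,bn)}^{(1),\alpha,\alpha^\star}\le n\phi^{(1)}(a,b,\alpha(a,b))-\tfrac{nf(\delta)}{2}\Bigr)\le 2\exp(-nc(\delta)).$$
On the complement of this event intersected with $\{D_n>an\delta\}$, and using that $f$ is non-decreasing (so $f(\eta)\ge f(\delta)\ge f(\delta)/2$), the two previous displays combine into
$$L_{(an,bn)}^{(1)}(\eta)\ \ge\ n\bigl(\phi^{(1)}(a,b,\alpha(a,b))-f(\eta)\bigr).$$

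Since $\eta$ takes at most $an$ values (the multiples of $1/(an)$ lying in $[\delta,1]$), a union bound and Lemma~\ref{Lem:OptiBord} together with the monotonicity of $g$ yield
$$\mathbb{P}(D_n>an\delta)\ \le\ an\exp\bigl(-ng(\delta)\bigr)+2\exp\bigl(-nc(\delta)\bigr),$$
which is bounded by $\exp(-nh(\delta))$ for $n$ large enough and a suitable positive $h$. The $L^1$-convergence $\mathbb{E}[D_n/n]\to 0$ then follows from dominated convergence since $D_n/n\le a$. The main obstacle I foresee lies in the first, structural step: one must argue cleanly that $\pi_n$ collects \emph{every} source on its initial horizontal segment (so that $D_n$ can be identified with the total source count in $[1,u]$), which relies on optimality of $\pi_n$ and demands some care regarding the arbitrary choice made when several optimal paths exist.
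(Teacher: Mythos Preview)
Your proof is correct and follows essentially the same route as the paper's: both identify the (random) abscissa $u$ at which $\pi_n$ leaves the $x$-axis, observe that on $\{D_n>an\delta\}$ one has $\eta:=u/(an)>\delta$ and $L^{(1)}_{(an,bn)}(\eta)\ge \mathcal{L}^{(1),\alpha,\alpha^\star}_{(an,bn)}$, and then combine the lower-tail concentration of $\mathcal{L}$ (splitting it as $\mathcal{T}+\#\{\text{sinks}\}$, each a binomial) with Lemma~\ref{Lem:OptiBord} and a union bound over the at most $an$ possible values of $\eta$. Your flagged obstacle is actually a non-issue: by the very definition of the admissible paths in \eqref{eq:defmax2}, a path making $u$ initial $(1,0)$-steps visits every lattice point $(1,0),\dots,(u,0)$ and its score $\sum_s \xi_{t(s)}(x(s))$ therefore automatically includes every source in $[1,u]$, so $D_n$ equals the number of sources in $[1,u]$ without any appeal to optimality or to the tie-breaking rule.
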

\begin{proof}[Proof of Lemma \ref{Lem:MajoPointsDuBord}]
By definition, if $D_n>an\delta$ then for some $\eps \ge \delta$ such that $\eps a n \in \N$ and $\epsilon \le 1$
\begin{itemize}
\item there are more than $an\delta$ sources in $\set{1,\dots,\eps an}$;
\item $L_{(an,bn)}^{(1)}(\eps)= \mathcal{L}_{(an,bn)}^{(1),\alpha,\alpha^\star}=\mathcal{T}_{(an,bn)}^{(1),\alpha,\alpha^\star}+\#\set{\text{sinks between $1$ and $bn$}}$.
\end{itemize}
Therefore
\begin{align*}
\mathbb{P}\left\{ D_n>an\delta\right\}
&\leq \mathbb{P}\left( \mathcal{T}_{(an,bn)}^{(1),\alpha,\alpha^\star} \le n(a\alpha-\frac{1}{2}f(\delta))\right)\\
&+  \mathbb{P}\left( \#\set{\text{sinks}} \le n(b\alpha^\star-\frac{1}{2}f(\delta))\right)\\
&+ \mathbb{P}\left(L_{(an,bn)}^{(1)}(\eps)  > n\left(a\alpha+b\alpha^\star-f(\delta)\right) \text{for some }\eps\text{ as above}\right)\\
&\leq \exp(-n\tilde{g}(\delta))
+\sum_{1 \ge \eps \ge \delta;\ \eps an\in \N} \mathbb{P}\left(L_{(an,bn)}^{(1)}(\eps)  > n\left(\phi^{(1)}(a,b,\alpha(a,b))-f(\delta)\right) \right)\\
&\leq \exp(-n\tilde{g}(\delta))
+\sum_{1 \ge \eps \ge \delta;\ \eps an\in \N} \mathbb{P}\left(L_{(an,bn)}^{(1)}(\eps)  > n\left(\phi^{(1)}(a,b,\alpha(a,b))-f(\eps)\right) \right)\\
&\leq \exp(-n\tilde{g}(\delta))
+\sum_{1 \ge \eps \ge \delta;\ \eps an\in \N}  \exp(-ng(\eps))  \\
 & \leq  \exp(-n\tilde{g}(\delta))+an\exp(-ng(\delta))
\end{align*}
for large $n$, where we used Lemma \ref{Lem:OptiBord} and where $\tilde{g}$ is some positive function.
This implies the first part of the lemma.
The convergence of $D_n/n$ to 0 in probability and in $L^1$ follows, since the sequence is bounded. 
\end{proof}

We now conclude the proof of the lower bound noticing that we have 
\begin{equation*}
\mathcal{L}_{(an,nb)}^{(1),\alpha,\alpha^\star}-D_n-D_n' \le L_{(an,bn)}^{(1)}
\end{equation*}
where $D'_n$ is the number of sinks $(i,j)\in \pi_n$ with $i=0$.
By the previous lemma we know that $D_n/n$ tends to $0$ in $L^1$.
By symmetry, the same result holds for $D'_n/n$.
Taking expectations of both sides in the previous inequality we get
$$
\liminf_{n\to +\infty} \frac{1}{n}\mathbb{E}[L_{(an,bn)}^{(1)}] \geq a\alpha +b\alpha^\star=\phi^{(1)}(a,b,\alpha(a,b)).
$$
\end{proof}
\subsection*{Back to  Ulam's constant}
We observe that if we take $p=1/n$, Theorem 1 for Model 1 suggests $L_n^{(1)}\approx 2\sqrt{n}$, which is consistent with the asymptotics of  Ulam's problem, since $\xi$ is then close, after renormalization, to a Poisson point process with intensity $n$.

In fact, one can rigorously recover that $c=2$ using our proof of Theorem 1. To do so,  consider a  Poisson point process $\Xi$ with intensity $n$ in the unit square and denote by $\ell(n)$ the greatest number of points of $\Xi$ an increasing path can go through. To get   a lower bound and an upper bound of $\ell(n)$, we divide the square $[0,1]^2$ in two different ways.

 First, we fix some $k\ge 1$ and  we divide it into small squares of length side $1/(k\sqrt{n})$. Say that $\xi_{j}(i)=1$ if at least one point of $\Xi$ is in the square with top-right corner $(i/(k\sqrt{n)},j/(k\sqrt{n}))$ and consider the quantity 
$L_{k\sqrt{n}}^{(1)}:=L_{(k\sqrt{n},k\sqrt n)}^{(1)}$ associated to the family $(\xi_{j}(i))_{i,j \le k\sqrt{n}}$. It is clear that 
 \begin{equation}\label{eq:ulam1}
 \ell(n)\ge L_{k\sqrt{n}}^{(1)}.
 \end{equation} Denoting 
\begin{equation*}p_k=
\mathbb{P}(\xi_{j}(i)=1)=\mathbb{P}\left(\text{Poiss}(1/k^2) \geq 1 \right)=1-e^{-1/k^2},
\end{equation*}
Theorem \ref{TheoLLN} implies 
\begin{equation*}
\frac{L_{k\sqrt{n}}^{(1)}}{\sqrt{n}}\xrightarrow[n\to +\infty]{\text{a.s.}} \frac{2k\sqrt{p_k}}{\sqrt{p_k}+1}.
\end{equation*}
 Using \eqref{eq:ulam1} and letting $k$ tend to infinity, we get $c:=\lim \ell(n)/\sqrt{n} \ge 2$.
 
To prove the upper bound, we divide now the square into small squares of length side $1/n^4$. Say that $\xi_{j}(i)=1$ if at least one point of $\Xi$ is in the square with top-right corner $(i/n^4,j/n^4)$ and consider the quantity $L_{n^4}^{(1)}:=L_{(n^4,n^4)}^{(1)}$ associated to the family $(\xi_{j}(i))_{i,j \le n^4}$. The parameter of these Bernoulli random variables is now 
$$
\tilde{p}_n=\mathbb{P}\left(\text{Poiss}(1/n^7) \geq 1 \right)=1-e^{-1/n^7}.
$$
With probability higher than $1-n^{-2}$,  all the columns and lines of width $1/n^4$ contain at most one point of $\Xi$. On this event that we denote $F_n$, $L_{n^4}^{(1)}$ coincides with $\ell(n)$.
We now use some intermediate results of the proof of the upper bound for $a=b=1$.
Inequality \eqref{Eq:DominationPuits1} still holds despite the dependence of $\tilde{p}_n$ on $n$ \emph{i.e.}
$$
 L_{n^4}^{(1)}\leq \mathcal{L}_{(n^4,n^4)}^{(1),\alpha_n,\alpha_n^\star},
$$
with $\alpha_n=\alpha_n^\star=\frac{\sqrt{p_n}-p_n}{1-p_n}\sim n^{-7/2}$. Using \eqref{eq:tgv},we get
\begin{align*}
\mathbb{E}[\ell(n)]&\leq \mathbb{E}[|\Xi| \mathbf{1}_{\bar{F_n}}] +
\mathbb{E}[\mathcal{L}_{(n^4,n^4)}^{(1),\alpha_n,\alpha_n^\star}] \\
&\leq \sqrt{\mathbb{E}[\mathrm{Poiss}(n)^2] /n^2} +n^4(\alpha_n+\alpha_n^\star),
\end{align*}
where we used the Cauchy-Schwarz inequality. Dividing by $n$ and letting $n$ go to infinity, we obtain $c\leq 2$.


\section{Stationary measures of Hammersley's process on $\Z$}\label{Sec:SurZ}

In this section, which can be read independently from the rest of the article, we study the analogous on the infinite line $\bbZ$ of the process $(X^1_{t})_{t\geq 0}$ defined in \eqref{eq:defX}. This infinite Hammersley process was introduced by T.Sepp\"al\"ainen to prove the Law of Large Numbers for Problem 1.

In order to generalize $(X_{t})_{t\geq 0}$ to a process taking its values in $\{0,1\}^\bbZ$ we need a construction that does not rely on Hammersley lines.
As before, $\left(\xi_{t}(i)\right)_{i\in\bbZ,t\in \mathbb{N}}$ denote i.i.d. Bernoulli$(p)$ random variables we say that there is a \emph{cross} at time $t$ located at $x$ if $\xi_t(x)=1$.
Informally, the infinite Hammersley process is defined as follows:
\begin{center}\emph{
At time $t$, if there is a particle at $x$ and if the particle immediately on its left is at $y$, then the particle at $x$ jumps at time $t+1$ at the rightmost cross of $\xi_{t+1}$ in interval $(y,x)$ (if any, otherwise it stays still).}
\end{center}

For our purpose, we need a construction of $(X_t)$ in which we let the crosses \emph{act} one by one on configurations.
We first draw $X_0\in\{0,1\}^\bbZ$ at random according to some distribution  $\mu$ such that $\mu-$almost surely, for all $i$ there is $j<i$ such that $X_0(j)=1$.

We now explain how to construct $(X_{t+1})$ from $(X_t)$. A cross will \emph{act} on a given configuration of particles in the following way: if a cross is located at $x$, then the leftmost particle of $(X_t)$ in the interval $\llbracket x,+\infty \llbracket$ (if any) moves to $x$; if there is no such particle then a particle is created at $x$. 

With this definition, we can now construct the value of $(X_{t+1})$ as a function of $(X_t)$ and the crosses at time $t+1$:
We define $X_{t+1}$ as the result of the {\em successive} actions on $X_t$ of all crosses at time $t+1$ {\em from the right to the left}.
Note that by definition, there only a finite number of crosses in $\xi_{t+1}$ that can modify $X_t(i)$: those between $X_t(i)$ and the first particle on its left.
 
An example is  drawn in this figure where the circles represent the particles and the crosses  the locations where $\xi_{t+1}$ equals $1$. Here, we have $X^1_{t+1}=(1,0,0,1,0,0,0,1,1,0,0,...)$ (as in the previous sections, time goes from bottom to top in our pictures):

\begin{center}
\medskip
\includegraphics[width=11cm]{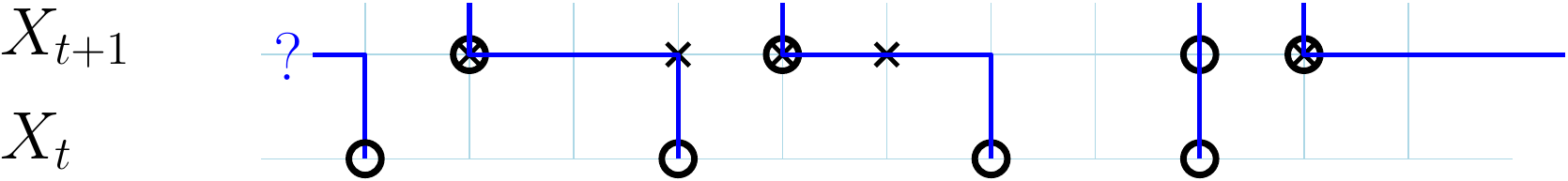}
\medskip
\end{center}

\begin{theo}\label{Theo:Extremality}
The only extremal translation-invariant stationary measures of  Hammersley's process on $\bbZ$ are measures $\mathrm{Ber}(\alpha)^{\otimes \Z}$ for all $\alpha\in (0,1]$.
\end{theo}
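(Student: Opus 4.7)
The plan follows the basic-coupling strategy of Aldous and Diaconis for the continuous Hammersley process on $\R$, adapted to the discrete setting. First I would check that for every $\alpha \in (0,1]$ the product measure $\nu_\alpha := \mathrm{Ber}(\alpha)^{\otimes \Z}$ is shift-ergodic and stationary for the dynamics. Ergodicity is immediate. Stationarity for $\alpha = 1$ is trivial. For $\alpha \in (0,1)$, it can be obtained either by a thermodynamic limit in Theorem \ref{Th:StatioSourcesPuits} with the parameters of \eqref{condmodel1}, or directly by applying Lemma \ref{Lem:Burke1} site-by-site from right to left: under $\nu_\alpha$, particles accumulate to the left of every fixed site $\nu_\alpha$-almost surely, so the sequential construction of $X_1$ from $X_0$ is well-defined, and the local balance preserves the Bernoulli law.

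Now let $\mu$ be an extremal TI stationary measure. Extremality among translation-invariant probability measures is equivalent to shift-ergodicity, so the density $\alpha := \mu(X_0(0) = 1)$ is deterministic. The empty configuration is not invariant (crosses create particles), so $\alpha > 0$; if $\alpha = 1$ then $\mu = \nu_1$ trivially, so I may assume $\alpha \in (0,1)$. I would then couple two copies of the process: $X_0 \sim \mu$ and $Y_0 \sim \nu_\alpha$ independent, both driven by the same crosses $(\xi_t(x))$. Both marginals remain stationary in time with the same density $\alpha$.

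The core step is an annihilation rule for the signed discrepancy field $D_t(x) := X_t(x) - Y_t(x) \in \{-1,0,+1\}$. A careful case analysis on the action of a single cross at site $a$, splitting on the two leftmost particles $\ge a$ in $X_t$ and $Y_t$, should show: (i) no discrepancy is ever created; (ii) a $+1$ discrepancy sitting at the leftmost $X$-particle $\ge a$ either jumps to the leftmost $Y$-particle $\ge a$ or, if that site already carries a $-1$, the two annihilate; (iii) symmetrically for $-1$ discrepancies. This is the discrete analog of the second-class-particle calculus of Aldous and Diaconis, and parallels the local balance of Lemma \ref{Lem:Burke1} transposed to the coupled system. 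In particular, the number of unsigned discrepancies in any finite window can only decrease along the dynamics, up to boundary flux.

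To conclude, since $\mathbb{E}[D_t(0)] = 0$ by equality of densities, the two sign-densities $\rho_t^\pm := \mathbb{E}[(D_t(0))_\pm]$ are equal; the annihilation rule together with the ergodicity of the common driving implies that $+1$ and $-1$ discrepancies keep meeting at positive rate whenever $\rho_t^\pm > 0$, so $\rho_t^\pm \to 0$ as $t \to \infty$. Translation invariance then forces $X_t(x) = Y_t(x)$ for every $x$ almost surely in the $t \to \infty$ limit, and since each marginal is stationary this yields $\mu = \nu_\alpha$. The main obstacle is the rigorous formulation of the annihilation rule and its exploitation to force $\rho_t^\pm \to 0$: the dynamics is not coordinate-wise monotone, so classical attractiveness does not apply, and the discrepancy bookkeeping must be written in a \emph{labelled second-class particle} language that respects the right-to-left sequential action of crosses at each time step, then combined with an ergodic successful-coupling argument in the spirit of Aldous and Diaconis.
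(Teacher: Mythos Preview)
Your plan lands on the right framework---couple $\mu$ with $\nu_\alpha=\mathrm{Ber}(\alpha)^{\otimes\Z}$ via the same crosses and track signed discrepancies---and your case analysis that a cross never creates a discrepancy and that adjacent $\pm$-discrepancies annihilate is exactly what the paper proves (its Lemma~\ref{l:new}). But the step ``$+1$ and $-1$ discrepancies keep meeting at positive rate whenever $\rho_t^\pm>0$, so $\rho_t^\pm\to 0$'' is a genuine gap. Non-increase of $\rho_t$ only gives convergence to some $\rho_\infty\ge 0$; to force $\rho_\infty=0$ you would need a \emph{uniform in $t$} lower bound on the probability of the annihilation pattern under $\pi_t$, and nothing in your outline supplies this. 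The coupling laws $\pi_t$ drift with $t$, and there is no reason the forbidden-pattern probability cannot vanish while $\rho_t$ stays positive. You flag this yourself as ``the main obstacle,'' but it is not a technicality: it is the heart of the matter, and invoking ``ergodicity of the common driving'' does not close it.

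The paper sidesteps this entirely by replacing your independent coupling with a \emph{minimal} one: over the (weakly compact) set of translation-invariant couplings of $\nu_\alpha$ and $\mu$, the continuous functional $\pi\mapsto\pi(X(0)\neq Y(0))$ attains its minimum at some $\underline\pi$. Running the coupled dynamics from $\underline\pi$ gives another element $\underline\pi_1$ of the same set, so $D(\underline\pi_1)\ge D(\underline\pi_0)$ by minimality. But the annihilation inequality shows $D(\underline\pi_1)\le D(\underline\pi_0)-c\cdot\underline\pi(\text{forbidden pattern})$, hence forbidden patterns have $\underline\pi$-probability zero. A separate lemma (using extremality of both $\mu$ and $\nu_\alpha$) shows that if $D(\underline\pi)>0$ then some pattern of positive probability can be driven by crosses into a forbidden one, a contradiction. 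Thus the compactness/minimality trick converts your asymptotic statement $\rho_t\to 0$ into a one-step rigidity statement $D(\underline\pi_1)=D(\underline\pi_0)$, which is what makes the argument close. This is the Jockusch--Propp--Shor/Liggett device rather than the Aldous--Diaconis one you cite; if you want to pursue your route, you would need to prove that any subsequential limit of the $\pi_t$ is itself stationary for the coupled dynamics and then run the minimal-coupling argument on that limit---at which point you have essentially reproduced the paper's proof.
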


This theorem is precisely the discrete counterpart of (\cite{AldousDiaconis}, Lemma 7).
The fact that Bernoulli product measures are stationary was proved by T.Sepp\"al\"ainen (\cite{Sepp} Lemma 2.1). 
The reason for which we only focus on Model $1$ on the infinite line is that the analogous of Model $2$ is much more complicated to analyze. Indeed, the evolution of a particle in Model $2$ depends on the whole configuration on its left. We don't know if a similar statement to Theorem \ref{Theo:Extremality} holds for Model 2.

In order to prove that there are no other extremal measures we adapt the proof of Jockush-Propp-Shor (\cite{ArcticCircle} p.17-20) for the discrete-time TASEP (see also \cite{Liggett} Ch.VIII). The main difference here with Jockush-Propp-Shor is our Lemma \ref{l:new} below that replaces their \emph{block argument}.

\begin{proof}[Proof of extremality]
Let $\alpha\in (0,1]$ and $\mu$ be an extremal translation-invariant stationary measure on $\set{0,1}^\Z$ with marginals
$\mu(X(i)=1)=\alpha$, we want to prove that $\mu=\mathrm{Ber}(\alpha)^{\otimes \Z}$.

The proof is made of the following steps: 
\begin{enumerate}
\item we introduce a measure $\underline{\pi}$ on $\set{0,1}^\Z\times \set{0,1}^\Z$ which is a \emph{minimal} coupling between  $\mathrm{Ber}(\alpha)^{\otimes \Z}$  and $\mu$;
\item we prove (Lemma \ref{Lem:MotifInterdit}) that some patterns that would decrease $\card\set{i;x_i\neq y_i}$, have $\underline{\pi}$-probability zero;
\item we conclude: $\mu=\mathrm{Ber}(\alpha)^{\otimes \Z}$.
\end{enumerate}

Let $\mathcal{M}_\star$ be the set of translation-invariant measures on $\set{0,1}^\Z\times \set{0,1}^\Z$ that are a coupling of $\mathrm{Ber}(\alpha)^{\otimes \Z}$ and $\mu$. The set $\mathcal{M}_\star$ is non-empty (it contains $\mathrm{Ber}(\alpha)^{\otimes \bbZ}\otimes \mu$) and compact for the weak topology.

We will prove that in $\mathcal{M}_\star$ there is a coupling $\left((X(i))_{i\in\Z},(Y(i))_{i\in\Z}\right)$ such that $X=Y$ a.s.
Set
$$
\begin{array}{r c c l}
D : & \mathcal{M}_\star & \to & [0,1]\\
& \pi         & \mapsto & \pi\left(X(0)\neq Y(0)\right).
\end{array}
$$
 The function $D$ is continuous on the compact set $\mathcal{M}_\star$ and thus attains its minimum $\delta \geq 0$ at some coupling $\underline{\pi}$. If $\delta=0$, then the theorem is proved: by translation-invariance $X=Y$ $\underline{\pi}$-a.s.

Let $(X_0,Y_0)\sim \underline{\pi}$, we use the same   random variables $\left(\xi_{t}(i)\right)_{i\in\bbZ,t\in \mathbb{N}}$ to define the dynamics of $X$ and $Y$.
We denote by $(X_t,Y_t)$ the pair of configurations at time $t$, its joint distribution is denoted by $\underline{\pi}_t$. By construction,  for every $t$, $\underline{\pi}_t$ is translation-invariant.
For every $k \ge 1$ we denote by
$$
\Delta_k(X_t,Y_t)
$$
the number of $i \in \{1,\dots,k\}$ such that $X_t(i) \neq Y_t(i)$.

Let $n \ge 2$ and $t \ge 0$.
We say that there is a $n-$forbidden pattern at location $x \in \Z$  at time $t$ if the configuration $(X_t,Y_t)$ between locations $x$ and $x+n-1$
is either
 $$
\begin{pmatrix}
        &                 & x   &   &         & x+n-1     & \\
\ \ X_t = &(&1   & 0 & \dots\ 0& 0     & )\ \ \\
\ \ Y_t = &(&0   & 0 & \dots\ 0& 1     & )\ \
\end{pmatrix}
$$
or
$$
\begin{pmatrix}
        &                 & x   &   &         & x+n-1     & \\
\ \ X_t = &(&0   & 0 & \dots\ 0& 1     & )\ \ \\
\ \ Y_t = &(&1   & 0 & \dots\ 0& 0     & )\ \
\end{pmatrix}.
$$

\begin{lem}\label{l:new} For $n \ge 2$, let $E_{x,n}$ be the event $\xi_1(x)=1$, $\xi_1(x+1)=\xi_1(x+2)=\dots =\xi_1(x+n-1)=0$.
Let $j \ge 1$, denote by $F(n,j)$ the subset of locations of $\{0,n,2n,\dots,(j-1)n\}$ at which is located a forbidden pattern at time $0$. Let 
$$A(n,j)=\card\{x\in F(n,j),\; E_{x,n} \mbox{ occurs}\}.$$
Then:
$$
\Delta_{jn}(X_1,Y_1) \le \Delta_{jn}(X_0,Y_0) + 1 - 2A(n,j).
$$

\end{lem}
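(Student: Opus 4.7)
The plan is to process the crosses of $\xi_1$ from right to left as in the definition of the dynamics and to track, cross by cross, how the number of disagreements changes. The scheme has two layers: a global annihilation property of the coupling, and a careful bookkeeping of window boundary effects.

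\emph{Global monotonicity and forbidden patterns.} For a single cross at $x$, let $r_X$ (resp.\ $r_Y$) be the position of the leftmost particle of the current $X$ (resp.\ $Y$) in $[x,\infty)$; the cross modifies only the three coordinates $x, r_X, r_Y$. A short case analysis on $(X(x),Y(x))$ shows that the global disagreement $\Delta^\infty := \sum_{i\in\Z}|X(i)-Y(i)|$ changes by either $0$ or $-2$; the $-2$ happens precisely when $r_X\neq r_Y$ and the configuration whose leftmost particle is the further of the two carries a $0$ at the other's leftmost position. Suppose now that a forbidden pattern sits at $x$ at time $0$ and $E_{x,n}$ holds. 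Crosses at positions $\geq x+n$ only rearrange particles inside $[x+n,\infty)$ and hence leave $[x,x+n-1]$ untouched, while $E_{x,n}$ forbids crosses in $[x+1,x+n-1]$. So when the cross at $x$ is processed, the pattern is still intact and this cross is exactly of the $-2$ type with $r_X=x$ and $r_Y=x+n-1$ (or vice versa), annihilating the two disagreements at positions $x$ and $x+n-1$. As the blocks $[kn,(k+1)n-1]$ are pairwise disjoint, these contributions add and the forbidden-pattern firings decrease $\Delta^\infty$ by $2A(n,j)$.

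\emph{Window boundary bookkeeping.} It remains to translate the global decrease into a bound on $\Delta_{jn}$, which counts only positions in $\{1,\dots,jn\}$. Crosses at $x>jn$ do not touch the window. A case analysis shows that every cross at $x\in\{1,\dots,jn\}$ contributes at most $0$ to $\Delta_{jn}$ (no $+1$ is ever produced), whereas a cross at $x\le 0$ can generate a $+1$ only in a ``transition'' configuration, where one of $r_X,r_Y$ lies in $[x,0]$, the other sits in the window, and the configuration opposite to the latter already carries a particle at that window position. Among the forbidden-pattern events of the previous step, only the one at $x=0$ is affected by the cut: one of its two annihilated disagreements sits at position $0$ outside the window, so the $x=0$ firing contributes $-1$ instead of $-2$ to $\Delta_{jn}$.

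The key final observation is that these two sources of positive ``slack'' are mutually exclusive and each can occur at most once. After either a transition $+1$ event at some $x\le 0$ or the cross at the $x=0$ forbidden pattern is processed, the dynamics has simultaneously deposited a particle at a common site $z\le 0$ in both $X$ and $Y$. For any later cross at $x'<z$, the leftmost particles $r_{X'},r_{Y'}$ are then trapped in $[x',z]\subseteq(-\infty,0]$, so subsequent crosses cannot modify the window at all. Hence the window accumulates at most $+1$ of positive slack overall, which yields
$$
\Delta_{jn}(X_1,Y_1)-\Delta_{jn}(X_0,Y_0)\;\le\;1-2A(n,j).
$$
The delicate step will be the ``simultaneous-deposition'' observation: it is precisely what converts an a priori unbounded boundary contribution into the single unit of slack that makes the lemma sharp.
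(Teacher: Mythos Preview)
Your argument is correct and proceeds by the same cross-by-cross accounting as the paper, but the detour through the global discrepancy $\Delta^\infty$ is unnecessary: the paper tracks $\Delta_{jn}$ directly and simply records that crosses in $\{1,\dots,jn\}$ never increase it (a short case check), that each forbidden-pattern firing inside this range drops it by $2$, that the rightmost cross at a position $\le 0$ raises it by at most $1$, and that every further cross to the left has no effect on the window. Two small slips do not affect the conclusion: your description of when a cross produces a $-2$ is inverted (the correct condition is that the configuration with the \emph{closer} leftmost particle has a $0$ at the \emph{further} position, not vice versa --- your stated condition is vacuously true), and your sealing argument should appeal to the fact that \emph{any} cross at $x\le 0$ deposits a common particle at $x$, not only a transition-$+1$ or the $x=0$ forbidden-pattern cross; this is what actually guarantees that no second unit of slack can arise after the rightmost cross $\le 0$, regardless of what that first cross contributed.
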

\begin{proof}[Proof of Lemma \ref{l:new}]
We let the crosses at time $1$ act one by one from the right to the left.
We consider the impact of each cross action on the discrepancy $\Delta_{jn}$ between the two configurations. Let us note that,  when a cross located at $i$ acts on some configuration of particles, it  changes the value of at most two sites of the configuration: $i$ and the leftmost $1$ of the configuration in the interval $\llbracket i, \infty)$. Thus, we deduce the following facts:
\begin{itemize}
\item The crosses strictly to the right of $jn$ have no impact on the discrepancy.
\item A cross located at a point $x$ in $\{1,\dots,jn\}$ cannot increase the discrepancy.
To check this fact, one can study all the cases. They are all shown below, up to symmetries between $X$ and $Y$.
$$
\begin{pmatrix}
  &  x  & \\
 (&  1  & )\\
 (&  1  & )\\
\end{pmatrix},
\begin{pmatrix}
    & x &   &        &   &    & \\
 ( & 1  & ? & \cdots & ? & 1 & )\\
 (  & 0 & 0 & \cdots & 0 & 1 & )\\
\end{pmatrix},
$$
$$
\begin{pmatrix}
 & x &   &        &   &  & \\
  ( & 1 & ? & \cdots & ? & 0& )\\
  ( & 0 & 0 & \cdots & 0 & 1 & )\\
\end{pmatrix},
\begin{pmatrix}
&x &          &   & & \\
  ( &0 &  \cdots  & 0 & 1 &  )\\
  ( & 0 &  \cdots  & 0 & 1 & )\\
\end{pmatrix},
$$
$$
\begin{pmatrix}
 & x &        &   &   &   &   &   & & \\
 ( &0 & \cdots & 0 & 0 & 0 & 0 & 0 & 1 & )\\
 ( &0 & \cdots & 0 & 1 & ? & \cdots & ? & 1& )\\
\end{pmatrix},
\begin{pmatrix}
 & x &        &   &   &   &   &   & &\\
 (&0 & \cdots & 0 & 0 & 0 & 0 & 0 & 1 & )\\
 (&0 & \cdots & 0 & 1 & ? & \cdots & ? & 0& )\\
\end{pmatrix}.
$$
\item For a forbidden pattern at some $in$, if $E_{in,n}$ occurs, the cross at $in$ decreases the discrepancy by $2$.
\item In the worst case, the right-most cross which is strictly to the left of 1 increases the discrepancy by $1$.
\item The crosses to the left of the previous one have no impact on the discrepancy.
\end{itemize}
The result follows.
\end{proof}

\begin{lem}[Forbidden patterns]\label{Lem:MotifInterdit} 
Let $n \ge 2$.
The $\underline{\pi}_0$ probability of a $n$-forbidden pattern at any location is $0$.
\end{lem}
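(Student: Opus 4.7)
The plan is to exploit the minimality of $\underline{\pi}$ together with the bound from Lemma \ref{l:new}, and push the parameter $j$ to infinity to force the probability to vanish.

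First, observe that both $\mathrm{Ber}(\alpha)^{\otimes\Z}$ and $\mu$ are stationary for the Hammersley dynamics, so $X_1 \sim \mathrm{Ber}(\alpha)^{\otimes\Z}$ and $Y_1 \sim \mu$. Since the dynamics is translation-equivariant and $\underline{\pi}_0$ is translation-invariant, $\underline{\pi}_1$ is also translation-invariant and therefore belongs to $\mathcal{M}_\star$. By the minimality of $\delta$ at $\underline{\pi}$, we get $D(\underline{\pi}_1) \ge \delta$. By translation invariance,
\[
\E[\Delta_{jn}(X_0,Y_0)] = jn\,\delta, \qquad \E[\Delta_{jn}(X_1,Y_1)] = jn\,D(\underline{\pi}_1) \ge jn\,\delta.
\]

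Next, I would take expectations in Lemma \ref{l:new}. Combining the two bounds above yields
\[
jn\,\delta \;\le\; \E[\Delta_{jn}(X_1,Y_1)] \;\le\; jn\,\delta + 1 - 2\,\E[A(n,j)],
\]
so $\E[A(n,j)] \le 1/2$ for every $j \ge 1$.

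Now I analyse $\E[A(n,j)]$. The events $E_{in,n}$ for $i = 0,\dots,j-1$ concern disjoint blocks of size $n$ in $\xi_1$, so they are mutually independent and each has probability $p(1-p)^{n-1}$. Crucially, they depend only on $\xi_1$ and are therefore independent of $(X_0,Y_0)$, hence independent of the presence of a forbidden pattern at position $in$ at time $0$. Writing $P_n$ for the $\underline{\pi}_0$-probability of an $n$-forbidden pattern at a given site (which does not depend on the site by translation invariance), we obtain
\[
\E[A(n,j)] \;=\; \sum_{i=0}^{j-1} P_n \cdot p(1-p)^{n-1} \;=\; j\,P_n\,p(1-p)^{n-1}.
\]
Combined with the previous inequality, this gives $j\,P_n\,p(1-p)^{n-1} \le 1/2$ for all $j \ge 1$. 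Letting $j \to \infty$ forces $P_n = 0$, which is exactly the claim.

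The only subtle point is ensuring that $\underline{\pi}_1 \in \mathcal{M}_\star$ (so that the minimality of $\delta$ applies), and that the cross-pattern event $E_{in,n}$ is genuinely independent of the forbidden pattern at time $0$; both are immediate from our construction in which crosses at time $1$ are sampled independently of $X_0,Y_0$. Everything else is bookkeeping, and the argument is a clean discrete analogue of the classical ``$H$-theorem'' coupling used for TASEP.
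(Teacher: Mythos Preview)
Your proof is correct and follows essentially the same route as the paper: take expectations in Lemma~\ref{l:new}, use independence of the crosses $\xi_1$ from $(X_0,Y_0)$ to compute $\E[A(n,j)]=jP_n\,p(1-p)^{n-1}$, invoke minimality of $\underline{\pi}$ to get $D(\underline{\pi}_1)\ge D(\underline{\pi}_0)$, and let $j\to\infty$. The only difference is cosmetic (you isolate the inequality $\E[A(n,j)]\le 1/2$ before passing to the limit, whereas the paper divides by $jn$ first).
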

\begin{proof}[Proof of Lemma \ref{Lem:MotifInterdit}]
Let $j \ge 1$.
By Lemma \ref{l:new} we have:
$$
\Delta_{jn}(X_1,Y_1) \le \Delta_{jn}(X_0,Y_0) + 1 - 2A(n,j).
$$
Note that the probability that, at some location $in$, there is a $n$-forbidden pattern and that $E_{in,n}$ occurs is $\zeta p(1-p)^{n-1}$
where $\zeta$ is the probability of a $n$-forbidden pattern at a given location.
We aim to prove $\zeta=0$.
Taking expectation in the previous display we get
$$
jn D(\underline{\pi}_1) \le jn D(\underline{\pi}_0) + 1 - 2j\zeta p(1-p)^{n-1}.
$$
Dividing by $nj$ and letting $j \to \infty$ we get:
$$
D(\underline{\pi}_1) \le D(\underline{\pi}_0) - \frac{2\zeta p(1-p)^{n-1}}n.
$$
But the minimality of $\underline{\pi}$ yields $D(\underline{\pi}_1) \ge D(\underline{\pi}_0)$ and we get $\zeta=0$.
\end{proof}

\begin{lem}[There is a forbidden pattern somewhere]
Assume that $\delta>0$, there exists $n$ such that\emph{
$$
\underline{\pi}
\begin{pmatrix}
           &               & 1   &   &         & n     & \\
\ \ X = &(&1   & \text{?} & \dots\ \text{?}& 0     & )\ \ \\
\ \ Y = &(&0   & \text{?} & \dots\ \text{?}& 1     & )\ \
\end{pmatrix}
>0
\quad \text{ or }\
\underline{\pi}
\begin{pmatrix}
           &               & 1   &   &         & n     & \\
\ \ X = &(&0   & \text{?} & \dots\ \text{?}& 1     & )\ \ \\
\ \ Y = &(&1   & \text{?} & \dots\ \text{?}& 0     & )\ \
\end{pmatrix}
>0
$$}
\end{lem}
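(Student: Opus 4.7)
I would argue by contradiction, assuming that both displayed probabilities vanish for every $n\ge 2$. By translation invariance of $\underline{\pi}$ the assumption is equivalent to
$$
\underline{\pi}\bigl(X(a)=1,\,Y(a)=0,\,X(b)=0,\,Y(b)=1\bigr)=0 \qquad\text{for every } a\ne b.
$$
Consequently $\underline{\pi}$-a.s.\ the sets $\{k:X(k)>Y(k)\}$ and $\{k:X(k)<Y(k)\}$ cannot both be nonempty, so the shift-invariant events $C=\{X\ge Y\text{ pointwise}\}$ and $C'=\{X\le Y\text{ pointwise}\}$ satisfy $\underline{\pi}(C\cup C')=1$. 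Since both marginals of $\underline{\pi}$ have density $\alpha$, the events $A=\{X(0)=1,Y(0)=0\}$ and $B=\{X(0)=0,Y(0)=1\}$ each have $\underline{\pi}$-probability $\delta/2$. The vanishing display forces $A\subseteq C\setminus C'$ (a.s.): on $A$, $X(0)>Y(0)$ rules out $C'$ and also rules out any site where $X<Y$, hence $X\ge Y$ everywhere. In particular $\underline{\pi}(C\setminus C')\ge \delta/2>0$.

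\textbf{Ergodic input.} The $X$-marginal of $\underline{\pi}$ is $\mathrm{Ber}(\alpha)^{\otimes\Z}$, which is ergodic, and the $Y$-marginal is $\mu$, which is ergodic because extremal translation-invariant. Birkhoff's theorem applied to each marginal gives $\frac1N\sum_{k=1}^N X(k)\to\alpha$ and $\frac1N\sum_{k=1}^N Y(k)\to\alpha$, $\underline{\pi}$-a.s. By shift-invariance of both $\underline{\pi}$ and $C\setminus C'$ one has $\mathbb{E}_{\underline{\pi}}[X(k)\mathbf{1}_{C\setminus C'}]=\mathbb{E}_{\underline{\pi}}[X(0)\mathbf{1}_{C\setminus C'}]$ for every $k$, so averaging for $k=1,\dots,N$ and passing to the limit by bounded convergence yields $\mathbb{E}_{\underline{\pi}}[X(0)\mathbf{1}_{C\setminus C'}]=\alpha\,\underline{\pi}(C\setminus C')$. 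The identical computation for $Y$ then gives
$$
\mathbb{E}_{\underline{\pi}}\bigl[(X(0)-Y(0))\mathbf{1}_{C\setminus C'}\bigr]=0.
$$
On $C\setminus C'$ the integrand is nonnegative, so this identity forces $X(0)=Y(0)$ $\underline{\pi}$-a.s.\ on $C\setminus C'$. Shifting and taking a countable union of null sets, $X=Y$ on all of $\Z$, $\underline{\pi}$-a.s.\ on $C\setminus C'$. But $C\setminus C'\subseteq\{X\ne Y\}$ by construction, so $\underline{\pi}(C\setminus C')=0$, contradicting the lower bound $\delta/2>0$.

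\textbf{Main obstacle.} The delicate point is that the joint coupling $\underline{\pi}$ is not assumed to be ergodic, so one cannot apply Birkhoff directly to the difference $X(k)-Y(k)$ and conclude that its Cesaro average tends to zero in any useful way. The workaround is to apply the ergodic theorem to each marginal separately and then transfer the information to the shift-invariant event $C\setminus C'$ by combining translation invariance of $\underline{\pi}$ with bounded convergence; this is what allows the computation of $\mathbb{E}_{\underline{\pi}}[X(0)\mathbf{1}_{C\setminus C'}]$ without ergodicity of the coupling. Once this step is in place, the remainder is a purely monotonicity-plus-nonnegativity contradiction.
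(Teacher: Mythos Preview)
Your overall strategy is sound and, modulo one point, gives a valid argument that is genuinely different from the paper's. The paper proceeds as follows: under the contradiction hypothesis $\underline{\pi}$ is supported on $A\cup B$ with $A=\{X\le Y\}$, $B=\{X\ge Y\}$; since the coupled dynamics (same crosses for $X$ and $Y$) preserves both $A$ and $B\setminus A$, the conditional marginals $\mu_A,\mu_{B\setminus A}$ and $\nu_A,\nu_{B\setminus A}$ are themselves translation-invariant \emph{stationary} measures; extremality of $\mu$ and of $\mathrm{Ber}(\alpha)^{\otimes\Z}$ in that class forces $\mu=\mu_A=\mu_{B\setminus A}$ and $\mathrm{Ber}(\alpha)^{\otimes\Z}=\nu_A=\nu_{B\setminus A}$, and the built-in dominations $\nu_A\preccurlyeq\mu_A$, $\mu_{B\setminus A}\preccurlyeq\nu_{B\setminus A}$ then give $\mu=\mathrm{Ber}(\alpha)^{\otimes\Z}$, hence $\delta=0$. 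Your route bypasses the dynamics entirely and relies only on ergodic averages of the marginals, which is an appealing simplification.

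The gap is the sentence ``the $Y$-marginal is $\mu$, which is ergodic because extremal translation-invariant.'' The hypothesis on $\mu$ is that it is extremal in the convex set of translation-invariant \emph{stationary} measures; this is in general weaker than being extremal among all translation-invariant measures, i.e.\ shift-ergodic. (The ergodic components of a translation-invariant stationary measure need not themselves be stationary.) Without shift-ergodicity of $\mu$ the limit $\frac1N\sum_{k=1}^N Y(k)$ need not be the constant $\alpha$, and your computation of $\E_{\underline\pi}[Y(0)\mathbf 1_{C\setminus C'}]$ breaks down.

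There is an easy repair that keeps your proof dynamics-free. Using only ergodicity of the $X$-marginal you already get $\E_{\underline\pi}[X(0)\mathbf 1_S]=\alpha\,\underline\pi(S)$ for every shift-invariant $S$; apply this with $S=C\setminus C'$, $S=C'\setminus C$ and $S=C\cap C'$. On $C\setminus C'$ one has $Y(0)\le X(0)$, on $C'\setminus C$ one has $Y(0)\ge X(0)$, and on $C\cap C'$ one has $Y(0)=X(0)$. Summing and using $\E_{\underline\pi}[Y(0)]=\alpha=\E_{\underline\pi}[X(0)]$ forces equality in both inequalities, in particular $\E_{\underline\pi}[(X(0)-Y(0))\mathbf 1_{C\setminus C'}]=0$. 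The rest of your argument then goes through verbatim, and you obtain a proof of the lemma that uses neither the stationarity of $\mu$ nor the monotonicity of the coupled dynamics, only that the $X$-marginal is a Bernoulli product and that both marginals have density~$\alpha$.
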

\begin{proof}
Let $(X,Y)\sim \underline{\pi}$,
$$
\delta = \underline{\pi}(X(0)\neq Y(0))=  \underline{\pi}(X(0)=1,Y(0)=0)+ \underline{\pi}(X(0)=0, Y(0)=1).
$$
The two terms on the right side are equal:
\begin{align*}
\underline{\pi}(X(0)=1,Y(0)=0)&=\underline{\pi}(X(0)=1)-\underline{\pi}(X(0)=1,Y(0)=1)\\
&= \underline{\pi}(Y(0)=1)-\underline{\pi}(X(0)=1,Y(0)=1)\quad (X,Y\text{ have same marginals})\\
&= \underline{\pi}(X(0)=0,Y(0)=1).
\end{align*}
Then $0<\delta/2=\underline{\pi}(X(0)=1,Y(0)=0)$. Assume that the lemma is false (for every $n$), we have  $\underline{\pi}(A\cup B)=1$, where
\begin{align*}
A&=\set{(x,y),x_i\leq y_i\text{ for all }i\in\bbZ},\\
B&=\set{(x,y),x_i\geq y_i\text{ for all }i\in\bbZ}.
\end{align*}
As seen above, $\delta>0$ implies then that $\underline{\pi}(A),\underline{\pi}(B \backslash A)>0$.
Besides, it is easy to check that $A$ and $B\backslash A$ are preserved by the dynamics. Hence, considering the restriction of $\underline{\pi}$ on these two subsets, $\underline{\pi}_{A}:=(\nu_A,\mu_A)$ and $\underline{\pi}_{B \backslash A}:=(\nu_{B \backslash A},\mu_{B \backslash A})$, we see that $\nu_A$ (resp. $\mu_A$) and $\nu_{B \backslash A}$ (resp. $\mu_{B \backslash A}$ ) define two  translation-invariant stationary measures such that
\begin{eqnarray*}
\mathrm{Ber}(\alpha)^{\otimes \Z}&=&\underline{\pi}(A)\nu_A+ \underline{\pi}(B \backslash A)\nu_{B \backslash A}.\\
\mu&=&\underline{\pi}(A)\mu_A+ \underline{\pi}(B \backslash A)\mu_{B \backslash A}.
\end{eqnarray*}
The extremality of $\mathrm{Ber}(\alpha)^{\otimes \Z}$ and $\mu$ implies that $\mathrm{Ber}(\alpha)^{\otimes \Z}=\nu_A=\nu_{B \backslash A}$ and $\mu=\mu_A=\mu_{B \backslash A}$. But, by definition of $A$ and $B$, $\nu_{A} \preccurlyeq \mu_A$ and $\mu_{B \backslash A} \preccurlyeq \nu_{B \backslash A}$. Thus, necessarily, $\mu=\mathrm{Ber}(\alpha)^{\otimes \Z}$.
\end{proof}
Now we can obtain our contradiction. Indeed, there exists an integer $n$ such that
$$
\underline{\pi}
\begin{pmatrix}
           &               & 1   &   &         & n     & \\
\ \ X = &(&1   & \text{?} & \dots\ \text{?}& 0     & )\ \ \\
\ \ Y = &(&0   & \text{?} & \dots\ \text{?}& 1     & )\ \
\end{pmatrix}
>0
\quad \text{ or }\
\underline{\pi}
\begin{pmatrix}
           &               & 1   &   &         & n     & \\
\ \ X = &(&0   & \text{?} & \dots\ \text{?}& 1     & )\ \ \\
\ \ Y = &(&1   & \text{?} & \dots\ \text{?}& 0     & )\ \
\end{pmatrix}
>0.
$$
Without loss of generality we do the first case. We can assume that $X,Y$ coincide between $2$ and $n-1$, otherwise we can decrease $n$.
Let $1 < k <n$ be the leftmost position at which $\binom{{X}({k})}{Y({k})}= \binom{1}{1}$.

With positive probability we can turn this $\binom{1}{1}$ into $\binom{1}{0}$:
$$
\begin{pmatrix}
        &   &\times & - &   -  & -  & - &      -     & - &\\
        &   &   1   &   &       &    & k &           & n &\\
\ \ X_t = & ( & 1     & 0 & \dots & 0  & 1 & \text{?}  & 0 &)\ \ \\
\ \ Y_t = & ( & 0     & 0 & \dots & 0  & 1 & \text{?}  & 1 &)\ \
\end{pmatrix}
\stackrel{t+1}{\to}
\begin{pmatrix}
        &   &  &   &      &    &   &            &   &\\
        &   &   1   &   &       &    &  k &           & n &\\
\ \ X_{t+1} = & ( & \text{?} & 0 & \dots & 0  & 1 & \text{?}  & 0 &)\ \ \\
\ \ Y_{t+1} = & ( & \text{?} & 0 & \dots & 0  & 0 & \text{?}  & 1 &)\ \
\end{pmatrix}
$$
We repeat this process between positions $k$ and $n$ and finally for some $t,i,j$ we have
$$
\underline{\pi}
\begin{pmatrix}
             &             & i   &   &         & j     & \\
\ \ X_t = &(&1   & 0 & \dots\ 0& 0     & )\ \ \\
\ \ Y_t = &(&0   & 0 & \dots\ 0& 1     & )\ \
\end{pmatrix}
>0,
$$
which contradicts Lemma \ref{Lem:MotifInterdit}.
\end{proof}

\appendix

\section{Dynamic of the underlying interacting particle system} \label{s:leretour}

In this appendix we explicit the dynamic of the underlying interacting particle system in Models 1 and 2.
Consider the processes $(X^i_t)_{t\ge 0}$ defined by \eqref{eq:defX} (without sources and sinks). 
One can explicitely construct  the dynamic of these processes in term of the family of 
i.i.d. random variables $\{\xi_{t}(x), x\in \llbracket 1,n \rrbracket, t\in \N\}$   with law Bernoulli$(p)$ (see \eqref{e:tram}). 
To do so, let say that there is a \emph{cross}  at time $t$ located at  $x$ if $\xi_t(x)=1$. 
A cross will \emph{act} on a given configuration of particles in the following way : if a cross is located at $x$, then
the left-most particle in the interval $\llbracket x,n \rrbracket$ (if any) moves to $x$ ; if there is no such particle then a particle is created at $x$. 

With this definition, we can now construct  the value of $X_{t+1}^i$ as a function of $X_t^i$ and the  crosses at time $t+1$:

\vspace*{0.2cm}

\noindent {\bf Model 1.} 
We define $X^1_{t+1}$ as the result of the {\em successive} actions on $X^1_t$ of all crosses at time $t+1$ {\em from the right to the left}.

An example is  drawn in this figure where the circles represent the particles and the crosses  the locations where $\xi_{t+1}$ equals $1$. Here, we have $X^1_{t+1}=(1,0,0,1,0,0,0,1,1,0,0)$ (as it is usual in the literature on Hammersley's processes, time goes  from bottom to top in our pictures):

\begin{center}
\includegraphics[width=11cm]{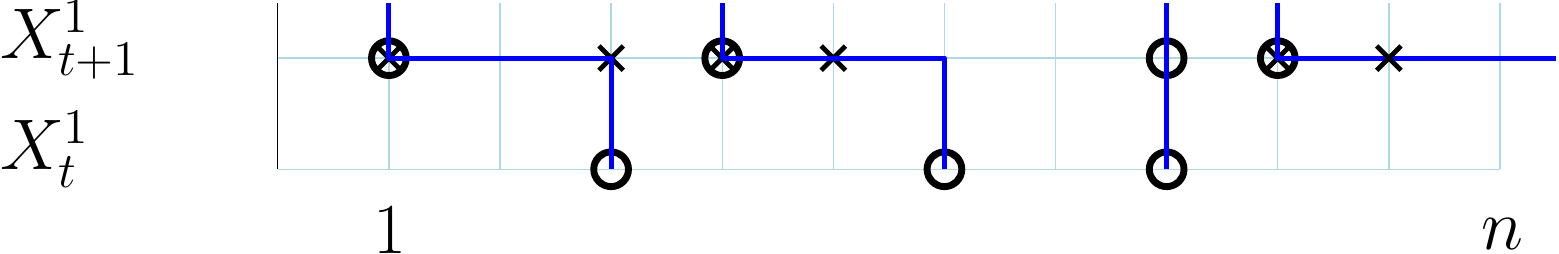}
\end{center}

Let us note that there is an alternative way to construct the model 1 :
At time $t$, if there is a particle at $x$ and if the particle immediately on its left is at $y$, then the particle at $x$ jumps at time $t+1$ at the rightmost cross of $\xi_{t+1}$ in interval $(y,x)$ (if any, otherwise it stays still). 
If there are some points  of $\xi_{t+1}$ at the right of the rightmost particle of $X^1_t$ then a new particle appears at the leftmost point among them.

\vspace*{0.2cm}

\noindent {\bf Model 2.} We define $X^2_{t+1}$ as the result of the {\em successive} actions on $X^2_t$ of all crosses at time $t+1$ {\em from the left to the right}.

\begin{center}
\includegraphics[width=11cm]{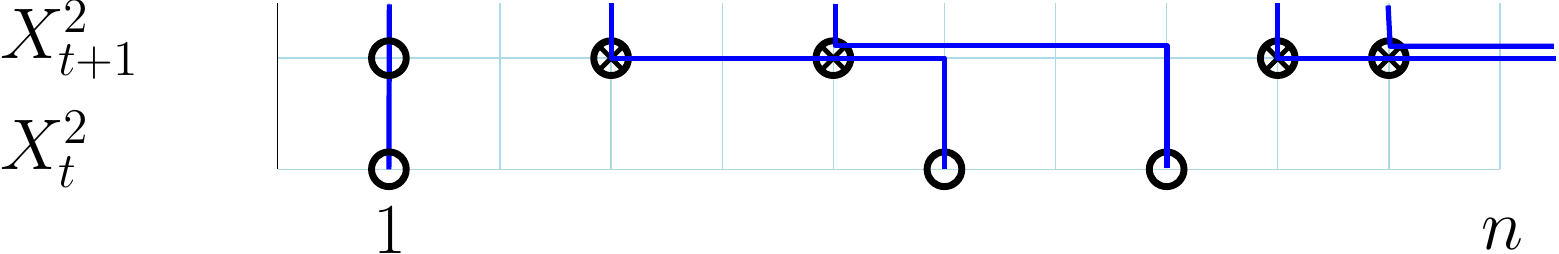}
\end{center}

We let the reader convince themselves that this construction coincides with the definition given in \eqref{eq:defX}.

Although the definition of both models seems, at first glance, very close, the nature of the two processes is in fact quite different. Indeed, in the first model, to find the location at time $t+1$ of a particle located at time $t$ at $x$, one just need to know the location $y$ of the particle immediately on his left at time $t$ and the position of the crosses of $\xi_{t+1}$ in the interval $(y,x)$. In particular, with no difficulty, one can define a  process with similar transitions on the whole line $\Z$ as it is done in \cite{Sepp}.
 For Model 2, the dependances are more intricate. Indeed, to determine the location at time $t+1$ of a particle located at time $t$ at $x$, one  need to know  the whole configuration of $X_{t}^2$ and $\xi_{t+1}$ on the interval $\llbracket 1,x \rrbracket$. In particular, the definition on the whole line $\Z$ of a similar process is more delicate and requires a condition between the density of crosses and particles (see \cite{Sepp2}).

 \medskip
 
Finally, let us note that the construction can be extended to the models with sources and sinks. 
The set of sources simply give the initial configuration of the particle system.
The sinks acts as the crosses.
In Model 1 sinks act after the other crosses while in Model 2 sinks act before the other crosses.

\paragraph{Aknowledgements.} The authors are glad to aknowledge David Aldous who was at the origin of their interest in this problem.

\end{document}